\newcommand{\R}{{\mathbb R}}
\def \d {\mathrm{d}}
\def \de {\partial}
\def \e {\varepsilon}
\newtheorem{thm}{Theorem}[section]
\newtheorem{lem}[thm]{Lemma}
\newtheorem{rem}[thm]{Remark}
\newtheorem{defn}[thm]{Definition}
\numberwithin{equation}{section}
\begin{document}

\title[]{Brezis-Nirenberg-type results for the anisotropic $p$-Laplacian}

\author[S.\,Biagi]{Stefano Biagi}
\author[F.\,Esposito]{Francesco Esposito}
\author[A.\,Roncoroni]{Alberto Roncoroni}
\author[E.\,Vecchi]{Eugenio Vecchi}

\address[S.\,Biagi]{Politecnico di Milano - Dipartimento di Matematica
\newline\indent
Via Bonardi 9, 20133 Milano, Italy}
\email{stefano.biagi@polimi.it}

\address[F.\,Esposito]{Dipartimento di Matematica e Informatica, Universit\`a della Calabria
\newline\indent
Ponte Pietro Bucci 31B, 87036 Arcavacata di Rende, Cosenza, Italy}
\email{francesco.esposito@unical.it}

\address[A.\,Roncoroni]{Politecnico di Milano - Dipartimento di Matematica
\newline\indent
Via Bonardi 9, 20133 Milano, Italy}
\email{alberto.roncoroni@polimi.it}

\address[E.\,Vecchi]{Dipartimento di Matematica, Universit\`a di Bologna
\newline\indent
Piazza di Porta San Donato 5, 40126 Bologna, Italy}
\email{eugenio.vecchi2@unibo.it}

\subjclass[2020]{Primary 35J62, 35B33, 35A15; secondary 35J92}
%35J62 - Quasilinear elliptic equations
%35B33 - Critical exponents in context of PDEs
%35A15 - Variational methods applied to PDEs
%35J92 - Quasilinear elliptic equations with p-Laplacian

\keywords{Anisotropic $p$-Laplacian, critical Sobolev exponent, positive solutions.}

\thanks{S.B. and E.V. are partially supported by PRIN project 2022R537CS {\em ``$NO^3$ - Nodal Optimization, NOnlinear elliptic equations, NOnlocal geometric problems, with a focus on regularity''}. F.E. is  partially supported by PRIN project P2022YFAJH (Italy): 
{\em Linear and Nonlinear PDE's: New directions and Applications.}  A.R. is partially supported by PRIN project 20225J97H5 (Italy): {\em Differential geometric aspects of manifolds via global analysis} and by the INdAM-GNSAGA project {\em Canonical structures on manifolds}.  F.E. and E.V. are members of GNAMPA-INdAM. F.E. and E.V. are partially supported by the INdAM-GNAMPA project {\em “Esistenza, unicit\`a e regolarit\`a di soluzioni per problemi singolari”.} }

\begin{abstract}
In this paper we consider a quasilinear elliptic and critical problem with Dirichlet boundary conditions in 
presence of the anisotropic $p$-Laplacian. The critical exponent is the usual $p^{\star}$ such that the embedding 
$W^{1,p}_{0}(\Omega) \subset L^{p^{\star}}(\Omega)$ is not compact. We prove the existence of a weak positive 
solution in presence of both a $p$-linear and a $p$-superlinear perturbation. In doing this, we have to perform 
several precise estimates of the anisotropic Aubin-Talenti functions which can be of interest for further 
problems. The results we prove are a natural generalization to the anisotropic setting of the classical ones by 
Brezis-Nirenberg \cite{BN}.
\end{abstract}

\maketitle
%\tableofcontents

%%%%%%%%%%%%%%%%%%%%%%%%%%%%%%%%%%%%%%

\section{Introduction}\label{intro}

The aim of this paper is to study the following quasilinear elliptic problem
\begin{equation}\label{eq:Problem}
\left\{\begin{array}{rl}
-\Delta^{H}_{p}u = u^{p^{\star}-1} + \lambda u^{q-1} & \textrm{in   } \Omega,\\
u >0 & \textrm{in } \Omega,\\
u=0 & \textrm{on } \partial \Omega.
\end{array}\right.
\end{equation}
Here, $\Omega$ is a bounded smooth domain of $\R^n$, $p \in (1,+\infty)$, 
$$
p^{\star}:= \frac{np}{n-p}
$$ 
(for $n > p$) denotes the critical Sobolev exponent, $p\leq q<p^\star$, and
\begin{equation*}
-\Delta^{H}_{p}u := -\mathrm{div} \left(H(\nabla u)^{p-1}\nabla H(\nabla u)\right)
\end{equation*}
\noindent is the anisotropic $p$-Laplacian, where $H:\mathbb{R}^{n}\to \mathbb{R}$ 
satisfies the following properties:
\begin{itemize}
\item[$(i)$] $H$ is convex;	
\item[$(ii)$] $H$ is positively 1-homogeneous (i.e. $H(t\xi)=t H(\xi)$, 
for every $\xi \in \mathbb{R}^n$ and $t>0$);
\item[$(iii)$] $H(\xi)>0$ for every $\xi \in \mathbb{R}^n$;
\item[$(iv)$] $H \in C^{2}(\mathbb{R}^{n}\setminus \{0\})$;
\item[$(v)$] the set (Wulff shape or Alexandrov body) $B_1^H:=\{x \in \R^n \ : \ H(x)<1\}$ is uniformly convex.
\footnote{We recall that a set in $\R^n$ is said to be uniformly 
convex if and only if all the principal curvature of its boundary are bounded away from $0$.}
\end{itemize}
We will call $H$ a {\it norm} even though we do not ask it to be symmetric. Obviously, a distinguished example of
a function $H$ satisfying 
properties $(i)$-to-$(v)$ is the Euclidean norm $|\cdot|$; in this case, the
operator $-\Delta^{H}_{p}$ boils down to the usual $p$-Laplace operator, indeed
$$-\Delta^H_p u = -\mathrm{div}\Big(|\nabla u|^{p-1}\frac{\nabla u}{|\nabla u|}\Big) = -\Delta_p u.$$

\medskip

Before entering into the details of the notion of solution to \eqref{eq:Problem} and the related results, let us 
describe the main features of the Euclidean version of \eqref{eq:Problem}, i.e. with $-\Delta_{p}^H = -\Delta_p$.
The first appearance of this problem dates back to \cite{BN}, it concerns the case $p=2$ (i.e. the leading 
operator is $-\Delta$), and it is nowadays addressed as {\it Brezis-Nirenberg problem}. In this case 
\eqref{eq:Problem} reads as 
 \begin{equation}\label{eq:Problem_BN}
\left\{\begin{array}{rl}
-\Delta u = u^{2^{\star}-1} + \lambda u^{q-1} & \textrm{in   } \Omega,\\
u >0 & \textrm{in } \Omega,\\
u=0 & \textrm{on } \partial \Omega.
\end{array}\right.
\end{equation}
\noindent Here $2^{\star} := 2n/(n-2)$ denotes the critical Sobolev exponent (at least for $n\geq 3$) and $\Omega$ 
is as before.
The problem \eqref{eq:Problem_BN}, as well as \eqref{eq:Problem}, has a variational structure in the sense that it 
can be seen as the Euler-Lagrange equation of the functional $F_{\lambda}:H^{1}_{0}(\Omega)\to \mathbb{R}$ defined 
as
\begin{equation*}
  F_{\lambda}(u):= \dfrac{1}{2}\int_{\Omega}|\nabla u|^2 \, dx -
   \dfrac{\lambda}{q}\int_{\Omega}(u^+)^q\, dx - \dfrac{1}{2^{\star}}\int_{\Omega}(u^+)^{2^{\star}}\, dx, 
\end{equation*}
where $u^+:=\max\{0,u\}$. In particular, critical points of the above functional turn out to be weak solutions of
\eqref{eq:Problem_BN}, and this opens the way to the use of variational methods and critical point theory. As it 
is well known, the first obstruction which prevents a direct variational approach is given by the lack of 
compactness caused by the presence of the critical term. 
Looking more at the PDE level, the reason behind the introduction of a second power-type term is related to the 
fact that purely critical problem (i.e. when $\lambda=0$) does not admit a non-trivial solution in any bounded 
star-shaped domain as a consequence of the well-known Pohozaev identity. Actually, this is still the case, and for 
a very similar argument, if one consider $\lambda <0$. All of this led Brezis and Nirenberg to ask whether a 
subcritical perturbation, even a linear one, could allow to gain back the required compactness yielding the 
existence of mountain pass solutions. The answer is positive and, possibly more surprising, exhibits a different 
behaviour depending on the dimension $n$ of the ambient space $\mathbb{R}^{n}$. We summarize it here below 
distinguishing between the linear perturbation (i.e. $q=2$) and the superlinear one (i.e. $q>2$). In what follows, 
$\lambda_1$ denotes the first Dirichlet eigenvalue of $-\Delta$.\\

\noindent \textbf{Case $q=2$}:
\begin{itemize}
\item if $n=3$ and $\Omega$ is a ball, problem \eqref{eq:Problem_BN} admits a solution 
for $\lambda \in (\lambda_1 /4, \lambda_1)$. See \cite[Theorem 1.2]{BN};
\item if $n\geq 4$, problem \eqref{eq:Problem_BN} admits a solution for every $\lambda \in (0,\lambda_1)$, and it 
does not admit any for $\lambda \not\in (0,\lambda_1)$. See \cite[Theorem 1.1]{BN}.
\end{itemize}
A similar distinction has to be made also when dealing with superlinear perturbations. Recall that when $n=3$, the 
critical exponent $2^{\star}=6$.\\
\noindent \textbf{Case $2<q<2^{\star}$}:
\begin{itemize}
\item if $n=3$ and $4<q<6$ problem \eqref{eq:Problem_BN} admits a solution for every $\lambda >0$, 
 while for $2<q
 \leq 4$ problem \eqref{eq:Problem_BN} admits a solution for large enough $\lambda$. 
 See \cite[Corollary 2.3 and 2.4]{BN};
\item if $n\geq 4$, problem \eqref{eq:Problem_BN} admits a solution for every $\lambda >0$. 
 See \cite[Corollary 2.1 and 2.2]{BN}.
\end{itemize}
Going back to the core of the proof in the case $q=2$, the crucial result which allows to get the conclusions 
listed before is the following one: calling $S$ the best Sobolev constant of the embedding 
$H^{1}_{0}(\Omega)\subset L^{2^{\star}}(\Omega)$ and
\begin{equation*}
S_{\lambda}:= \inf \left\{F_{\lambda}(u): u \in H^{1}_{0}(\Omega), \|u\|_{L^{2^{\star}}(\Omega)}=1 \right\},
\end{equation*}
\noindent if $S_{\lambda} < S$ then $S_{\lambda}$ is achieved (see \cite[Lemma 1.2]{BN}). To prove that this is actually the case, since $S$ is known to be realized in the whole of $\mathbb{R}^{n}$ 
by the so called Aubin-Talenti functions
\begin{equation*}
U_{\varepsilon}(x) := \left( \dfrac{C_{\varepsilon}}{\varepsilon^{2} + |x|^2}\right)^{\tfrac{n-2}{2}}, 
 \quad \varepsilon >0,
\end{equation*} 
\noindent the fact that $S_{\lambda} < S$ is obtained by careful estimates of $F_{\lambda}(V_\varepsilon)$ in 
terms of $\varepsilon$, where $V_{\varepsilon}$ is a suitable modification of $U_\varepsilon$ and this causes the 
mentioned bounds on $\lambda$.\vspace*{0.1cm}

The superlinear case relies instead on an application of the Ambrosetti-Rabinowitz mountain pass theorem. 
Therefore, apart from proving that the functional $F_{\lambda}$ satisfies the proper geometric structure, one has 
to exhibit an explicit path whose energy level (w.r.t. $F_\lambda$) is below a certain threshold given by $
\tfrac{1}{n}S^{n/2}$. Once again, a major role is played by careful estimates of suitable modifications of the 
Aubin-Talenti functions (thus leading to the appearance of $S$ in the estimate). We stress that this {mountain 
pass approach} can be performed in the linear case $q=2$ as well. This for what concerns the case of $-\Delta$ 
acting as leading operator. \medskip

The problem has then been generalized to various other operators such as the 
$p$-La\-pla\-cian, see 
\cite{AngEsp,AG,GAP,GAP2}. The list of other generalizations would be too 
long and would bring us too far from the purposes of the present paper.
As before, let us write down the problem we are describing when the leading operator is the $p$-Laplacian:
 \begin{equation}\label{eq:Problem_p_Lap}
\left\{\begin{array}{rl}
-\Delta_p u = u^{p^{\star}-1} + \lambda u^{q-1} & \textrm{in   } \Omega,\\
u >0 & \textrm{in } \Omega,\\
u=0 & \textrm{on } \partial \Omega.
\end{array}\right.
\end{equation}
\noindent Here $p^{\star} := np/(n-p)$ denotes the critical Sobolev exponent 
(at least for $n> p$) and $\Omega$ is as before.
The known result we are interested in can be summarized as follows:
\medskip

\noindent {\bf 1)\,\,Case $q=p$:}
\begin{itemize}
\item (See \cite[Theorem 3]{AG}): if $1<p<n<p^2$, then problem \eqref{eq:Problem_p_Lap} admits a solution for
 $$\lambda \in (\lambda_{1,p}-S_p |
\Omega|^{-p/n},\lambda_{1,p});$$ 
\item (See \cite[Theorem 7.4]{GAP}): if $n\geq p^2$ then problem \eqref{eq:Problem_p_Lap} admits a solution for 
$$\lambda \in (0,\lambda_{1,p}).$$
\end{itemize}
\noindent Here, in analogy with what we wrote above, $\lambda_{1,p}$ denotes the first variational Dirichlet 
eigenvalue of $-\Delta_p$ and $S_p$ the best Sobolev constant of the embedding 
$W^{1,p}_{0}(\Omega)\subset 
L^{p^{\star}}(\Omega)$. As for the Laplacian $-\Delta$, there are results also in case of $p$-superlinear 
perturbations.
\medskip

\noindent {\bf 2)\,\,Case $p<q<p^{\star}$:}
\begin{itemize}
\item (See \cite[Theorem 3.3]{GAP2}):  problem \eqref{eq:Problem_p_Lap} admits a solution for 
every $\lambda >0$, provided that $n$ is sufficiently large, namely
$$n > \tfrac{p[q(p-1)+p]}{q(p-1)+p-p(p-1)};$$

\item (See \cite[Theorem 3.2]{GAP2}):  there exists some $\lambda_0>0$ such that problem 
\eqref{eq:Problem_p_Lap} admits a so\-lu\-tion for every $\lambda \geq \lambda_0$.
provided that
$$n\leq \tfrac{p[q(p-1)+p]}{q(p-1)+p-p(p-1)}.$$
\end{itemize}
As for the Laplacian case, the crucial steps involve once again careful estimates of the appropriate Aubin-Talenti 
functions combined with suitable technical modifications due to the fact that the Sobolev space $W^{1,p}_{0}
(\Omega)$ is not more a Hilbert space. Since we want to preserve the readability, we avoid further details 
concerning this.
\medskip 
 
Our main goal in the present paper is to consider the anisotropic Brezis-Nirenberg problem \eqref{eq:Problem},  where the leading operator is the so called anisotropic $p$-Laplacian. This operator is an important generalization of the classical $p$-Laplacian operator that extends its applications to anisotropic settings by incorporating the geometry of Finsler spaces. This operator arises in a variety of contexts where anisotropy plays a critical role, such as image processing \cite{EsOs, PeMa}, and material science \cite{CaHo, Gu}. Its significance lies in its ability to model diffusion and transport phenomena in systems where isotropic assumptions fail, providing a more accurate representation of the underlying processes \cite{Ohata}. Its role in understanding anisotropic mean curvature flows and evolution equations has been extensively studied, with connections to convex geometry and relative geometry frameworks (see, e.g., \cite{BePa} and references therein).
We mention that in the last decades, qualitative properties of solutions to elliptic problems involving the anisotropic $p$-Laplace operator have been investigated, a partial list of papers and topics includes \cite{BiCi, Bi-Ci-Sa, CiSal, CiSal2,CiGr,CiLi} in which 
overdetermined problems are studied, \cite{CCR,CFR,CiLi2,CoFaVa,CoFaVa2,DiPoVa,EMSV,ERSV, FaLi1, FaScVu, WangXia} in which symmetry properties and Liouville-type theorems are studied and \cite{BFK,CRS,DeMaMiNe,FMP} in  which isoperimetric inequalities and CMC hypersurfaces are studied. As it should be clear after the brief review of the Euclidean results, if we want to follow the fruitful scheme described before we have to work with the proper replacement of the Aubin-Talenti functions. In the anisotropic case these have been explicitly written down 
in \cite{CFR}, see \eqref{eq:HTalentiane} for the precise analytic expression.
One of the main technical difficulties in estimating it as we need, is given by the non-Euclidean structure of the anisotropic norm. 
\medskip

Let us now introduce the notion of {\it solution of problem \eqref{eq:Problem}} we will work with.
\begin{defn}\label{def:weak_sol}
We say that a function $u \in W^{1,p}_{0}(\Omega)$ is a weak solution of
problem \eqref{eq:Problem} if it satisfies the following
properties:
\begin{itemize}
 \item[i)] $u > 0$ a.e.\,on $\Omega$;
 \item[ii)] for every test function $v\in W_0^{1,p}(\Omega)$ we have
 \begin{equation} \label{eq:WeakSolbyParts}
   \int_{\Omega}H(\nabla u)^{p-1}\langle \nabla H(\nabla u),\nabla v\rangle \, dx - 
   \lambda \int_{\Omega}u^{q-1}v \, dx - \int_{\Omega}u^{p^\star-1}v\, dx=0.
 \end{equation}
\end{itemize}
\end{defn}
Due to its relevance in the sequel, we explicitly highlight that
problem \eqref{eq:Problem} has a \emph{good variational structure}
(as in the classical case $H(\xi) = |\xi|$ described above), that is,
the weak solutions of this problem (in the sense of Definition \ref{def:weak_sol})
are precisely the non-zero critical points of the $C^1$ functional
$\mathcal{J}_{q, \lambda}:W_0^{1,p}(\Omega)\to\R$ defined by
\begin{equation} \label{eq:defFunctionalJ}
 \mathcal{J}_{q, \lambda}(u)=\frac{1}{p} \int_{\Omega}H(\nabla u)^p \, dx - \frac{\lambda}{q} \int_{\Omega}(u^+)^q \, dx -  \frac{1}{p^\star} \int_{\Omega}(u^+)^{p^\star} \, dx.
\end{equation}
Indeed, if $u\in W_0^{1,p}(\Omega)$ is a non-zero critical point of $\mathcal{J}_{q, \lambda}$,
we clearly have that $u$ satisfies the associated Euler-Largange equation
$\mathcal{J}_{q, \lambda}'(u)[v]=0$ for every $v\in W_0^{1,p}(\Omega)$, that is,
$$
 \int_{\Omega}H(\nabla u)^{p-1}\langle \nabla H(\nabla u),\nabla v\rangle \, dx - \lambda \int_{\Omega}(u^{+})^{q-1}v \, dx - \int_{\Omega}(u^{+})^{p^\star-1}v\, dx=0;$$
 in particular, we derive that $-\Delta_p^H u\geq 0$ \emph{in the weak sense on $\Omega$}.
 We can then apply both the Weak and Strong Maximum Principle
 for $-\Delta_p^H$ (see \cite[Theorem 5.3.1]{PSbook}), obtaining that $u > 0$ a.e.\,in $\Omega$;
 thus, $u^+ = u$ in $\Omega$ and $u$ is a weak solution of problem \eqref{eq:Problem}.
\medskip

With Definition \ref{def:weak_sol} at hand, 
we are now ready to start the description of the main results of the paper,
which are essentially of two types: \emph{existence results} (to which
we are mainly interested, re\-pre\-sen\-ting the real core of
the manuscript) and \emph{non-existence results}.

The first existence result 
concerns the case of $p$-linear perturbations 
(that is, $q=p$) when $n\geq p^{2}$, which can be considered as the analogous of the {\it high-dimensional case} in \cite{BN}. In what follows, $\lambda_1^H(\Omega)$ denotes the first eigenvalue of $-\Delta_p^H$ (see \eqref{eq:firstEigenvalue} below).
\begin{thm}\label{thm:BN-style}
Let $\Omega \subset \mathbb{R}^{n}$ be an open and bounded set with smooth boundary. Let $1<p<p^2 \leq n$ and let $\lambda \in (0, \lambda^{H}_{1}(\Omega))$, then \eqref{eq:Problem} admits a solution.
\end{thm}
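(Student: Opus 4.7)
The plan is to follow the variational scheme pioneered by Brezis-Nirenberg, adapted to the anisotropic setting. Define the Rayleigh-type quotient
$$S_\lambda^H := \inf_{u \in W_0^{1,p}(\Omega) \setminus \{0\}} \frac{\int_\Omega H(\nabla u)^p \, dx - \lambda \int_\Omega |u|^p \, dx}{\left(\int_\Omega |u|^{p^\star}\, dx\right)^{p/p^\star}}.$$
Since $\lambda < \lambda_1^H(\Omega)$, the numerator is strictly positive for every $u \neq 0$ (so $S_\lambda^H > 0$) and coercive along minimizing sequences in $W_0^{1,p}(\Omega)$. The first step is the \emph{compactness lemma}: adapting \cite[Lemma 1.2]{BN} (via a concentration-compactness argument, or via the Brezis-Lieb lemma combined with the anisotropic Sobolev inequality on $\R^n$), I would show that the strict gap
$$S_\lambda^H < S^H,$$
where $S^H$ is the best Sobolev constant realized on $\R^n$ by the anisotropic Aubin-Talenti functions, suffices to guarantee that $S_\lambda^H$ is attained by some non-trivial $u \in W_0^{1,p}(\Omega)$. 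After replacing $u$ with $|u|$, the Lagrange multiplier rule and a positive rescaling $\tilde u = \alpha u$ produce a weak solution of \eqref{eq:Problem}; positivity then follows from the strong maximum principle for $-\Delta_p^H$ cited in the excerpt.

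The central step is then to establish $S_\lambda^H < S^H$ by testing the quotient on a suitable truncation of the anisotropic Aubin-Talenti functions \eqref{eq:HTalentiane}. Fix an interior point of $\Omega$ (which we may assume to be the origin), take $\eta \in C_c^\infty(\Omega)$ with $\eta \equiv 1$ in a neighbourhood of that point, and set $V_\varepsilon := \eta\, U_\varepsilon$. The goal is to prove the asymptotic expansions
$$\int_\Omega H(\nabla V_\varepsilon)^p \, dx = K_1 + O\!\left(\varepsilon^{\tfrac{n-p}{p-1}}\right), \qquad \int_\Omega V_\varepsilon^{p^\star}\, dx = K_2 + O\!\left(\varepsilon^{\tfrac{n}{p-1}}\right),$$
with $K_1 / K_2^{p/p^\star} = S^H$, together with the key lower bound
$$\int_\Omega V_\varepsilon^p \, dx \;\geq\; \begin{cases} c\, \varepsilon^{p} & \text{if } p^2 < n, \\[2pt] c\, \varepsilon^{p}\,\log(1/\varepsilon) & \text{if } p^2 = n,\end{cases}$$
as $\varepsilon \to 0^+$. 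Plugging these into the quotient and using $\tfrac{n-p}{p-1} > p \iff n > p^2$ (with the logarithmic divergence handling the borderline case $n = p^2$), one sees that the $\lambda$-dependent $L^p$-correction dominates the gradient error term uniformly for every $\lambda > 0$, producing $S_\lambda^H < S^H$ for $\varepsilon$ small.

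The main obstacle is the explicit and uniform control of these integrals in the anisotropic setting: the level sets of $U_\varepsilon$ are Wulff shapes rather than Euclidean balls, so the natural radial variable is the dual norm $H_0(x)$ rather than $|x|$, and the relevant change of variables must rely on the anisotropic coarea formula and the uniform convexity assumption $(v)$ to reduce everything to one-dimensional radial integrals. A further delicate point is the cut-off expansion: because $H$ is not additive, writing $\nabla V_\varepsilon = \eta\,\nabla U_\varepsilon + U_\varepsilon\,\nabla \eta$ forces one to expand $H(\nabla V_\varepsilon)^p$ via the $C^2$-regularity of $H$ away from the origin together with its $1$-homogeneity, tracking the leading term $H(\nabla U_\varepsilon)^p\eta^p$ and controlling all remainders on the transition annulus $\{0<\eta<1\}$; this is precisely where the condition $n \geq p^2$ manifests itself in the final comparison. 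Once these estimates are established, the theorem follows by combining them with the compactness lemma and the maximum principle.
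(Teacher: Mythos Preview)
Your proposal is sound and follows a genuinely different route from the paper. The paper does \emph{not} use the constrained Rayleigh quotient $S_\lambda^H$; instead it works directly with the free functional $\mathcal{J}_{p,\lambda}$, establishes mountain-pass geometry (Lemma~\ref{lem:MPGeometry}), proves that any Palais--Smale sequence at a level strictly below $n^{-1}\mathcal{S}_H^{n/p}$ yields a nontrivial critical point (Lemma~\ref{lem:PSseq}), and then exhibits a path $t\mapsto t v_\varepsilon$ whose maximal energy lies below that threshold (Lemma~\ref{lem:Path_q=p}). Your approach is the original Brezis--Nirenberg strategy from \cite{BN}, transplanted to the anisotropic setting; the paper follows the mountain-pass scheme of \cite{GAP,AG}. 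Both rest on the same asymptotic expansions of the truncated Talenti bubbles (your $V_\varepsilon$, the paper's $v_\varepsilon$), and the dimensional dichotomy $n\geq p^2$ enters identically; your exponents $(n-p)/(p-1)$, $n/(p-1)$, $p$ simply reflect a different parametrisation of the bubble ($\mu=\varepsilon$ versus the paper's $\varepsilon^{(p-1)/p}$) and are consistent. The constrained approach is arguably more direct for $q=p$, while the mountain-pass framework is what lets the paper treat $q>p$ uniformly.

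One caveat specific to the anisotropic setting: the step ``replace $u$ by $|u|$'' is not automatic here, since the paper explicitly allows $H$ to be non-symmetric (so $H(-\xi)\neq H(\xi)$ in general), and hence $\int_\Omega H(\nabla|u|)^p$ need not be bounded by $\int_\Omega H(\nabla u)^p$. You can repair this exactly as the paper does: build positivity into the variational problem from the start by using $(u^+)^p$ and $(u^+)^{p^\star}$ in the quotient (or minimise over the cone of nonnegative functions), so that the Euler--Lagrange equation reads $-\Delta_p^H u \geq 0$ weakly and the strong maximum principle in \cite{PSbook} forces $u>0$.
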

The second one still concerns the case $q=p$ but when $1<p<n<p^2$, which is the analogous of the {\it low dimensional case} in \cite{BN}.
\begin{thm}\label{thm_AG}
Let $\Lambda=\mathcal{S}_{H} \, |\Omega|^{-p/n}$, assume $1<p<n<p^2$ and $\lambda\in(\lambda_1^H(\Omega)-\Lambda,\lambda_1^H(\Omega))$. Then \eqref{eq:Problem} admits a nontrivial solution.
\end{thm}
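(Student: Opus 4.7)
The plan is to adapt, in the anisotropic setting, the constrained-minimization scheme of Brezis-Nirenberg, with the essential twist that in the low-dimensional regime $n<p^{2}$ the Aubin-Talenti bubbles do not yield a profitable test function, so one uses instead a positive first eigenfunction of $-\Delta_p^H$ (as in \cite{AG} for the standard $p$-Laplacian). Concretely, consider
$$
\mathcal{S}_\lambda := \inf\Big\{\int_\Omega H(\nabla u)^p\,dx - \lambda \int_\Omega (u^+)^p\,dx :\, u\in W_0^{1,p}(\Omega),\ \|u^+\|_{L^{p^\star}(\Omega)}=1\Big\}.
$$
Since enlarging the negative part of $u$ only increases $\int H(\nabla u)^p$, candidate minimizers can be taken $\ge 0$; moreover $\lambda<\lambda_1^H(\Omega)$ gives $\mathcal{S}_\lambda>0$ via the Poincar\'e-type inequality associated to $\lambda_1^H(\Omega)$. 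The key structural ingredient I would first establish is the anisotropic counterpart of the compactness lemma \cite[Lemma 1.2]{BN}: \emph{if $\mathcal{S}_\lambda < \mathcal{S}_H$, then $\mathcal{S}_\lambda$ is attained.} The proof uses a Brezis-Lieb splitting for both $\int H(\nabla u_k)^p\,dx$ and $\int (u_k^+)^{p^\star}\,dx$ along a minimizing sequence $u_k\rightharpoonup u_0$; the subcritical term $\int u_k^p\,dx$ is controlled by the compact embedding $W_0^{1,p}(\Omega)\hookrightarrow L^p(\Omega)$, while applying the anisotropic Sobolev inequality $\mathcal{S}_H\|v\|_{L^{p^\star}}^p\le \int H(\nabla v)^p\,dx$ to the defect $v_k=u_k-u_0$ rules out concentration precisely under the strict inequality $\mathcal{S}_\lambda<\mathcal{S}_H$. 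Presumably the authors develop this lemma en route to Theorem~\ref{thm:BN-style} and it can be reused verbatim.

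Granted the compactness lemma, the whole game reduces to producing a test function whose $\mathcal{S}_\lambda$-quotient is strictly below $\mathcal{S}_H$. I would take $\varphi_1$, a positive first eigenfunction of $-\Delta_p^H$ in $\Omega$ normalized so that $\|\varphi_1\|_{L^{p^\star}(\Omega)}=1$; the eigenvalue identity $\int_\Omega H(\nabla\varphi_1)^p\,dx = \lambda_1^H(\Omega)\int_\Omega \varphi_1^p\,dx$ (see \eqref{eq:firstEigenvalue}) yields
$$
\mathcal{S}_\lambda \le \bigl(\lambda_1^H(\Omega)-\lambda\bigr)\int_\Omega \varphi_1^p\,dx.
$$
H\"older's inequality with exponents $p^\star/p$ and its conjugate $n/p$ then gives $\int_\Omega\varphi_1^p\,dx\le \|\varphi_1\|_{L^{p^\star}}^p\,|\Omega|^{1-p/p^\star}=|\Omega|^{p/n}$. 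Combining the two bounds with the standing hypothesis $\lambda>\lambda_1^H(\Omega)-\Lambda$ produces
$$
\mathcal{S}_\lambda \le \bigl(\lambda_1^H(\Omega)-\lambda\bigr)|\Omega|^{p/n} < \Lambda\,|\Omega|^{p/n} = \mathcal{S}_H,
$$
which is exactly the strict gap required.

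The compactness lemma then furnishes a nonnegative minimizer $u_0$ of $\mathcal{S}_\lambda$ with $\|u_0\|_{L^{p^\star}}=1$, and the Euler-Lagrange equation (with Lagrange multiplier $\mathcal{S}_\lambda$) reads
$$
-\Delta_p^H u_0 = \lambda u_0^{p-1} + \mathcal{S}_\lambda\, u_0^{p^\star-1}\quad\text{weakly in }\Omega.
$$
A homogeneity scaling $u:=\mathcal{S}_\lambda^{1/(p^\star-p)} u_0$ turns $u_0$ into a nonnegative, non-trivial weak solution of \eqref{eq:Problem}, and the Strong Maximum Principle for $-\Delta_p^H$ invoked in the excerpt upgrades $u\ge 0$ to $u>0$ a.e.\,in $\Omega$, closing the argument. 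The genuinely delicate point in this plan is the compactness lemma: that is where all anisotropy-specific technicalities concentrate, requiring both a Brezis-Lieb-type splitting adapted to the convex $p$-homogeneous integrand $H(\nabla\cdot)^p$ and a sharp control of the defect of compactness through $\mathcal{S}_H$. By contrast, the test-function and rescaling steps are essentially algebraic and transfer intact from the Euclidean $p$-Laplacian case.
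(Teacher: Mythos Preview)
Your test-function step is exactly what the paper does: in Lemma~\ref{lem:PathLow} the authors evaluate the functional along multiples of the first eigenfunction $u_1$, use the eigenvalue relation and H\"older with exponent $p^\star/p$ to get
\[
\sup_{t\geq 0}\mathcal{J}_{p,\lambda}(tu_1)\le \frac{(\lambda_1^H(\Omega)-\lambda)^{n/p}|\Omega|}{n}<\frac{\mathcal{S}_H^{n/p}}{n},
\]
which is precisely your inequality $\mathcal{S}_\lambda<\mathcal{S}_H$ rewritten at the mountain-pass level.

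However, the surrounding architecture is different. You propose a \emph{constrained-minimization} scheme and rely on a compactness lemma of Brezis--Nirenberg type (``if $\mathcal{S}_\lambda<\mathcal{S}_H$ then $\mathcal{S}_\lambda$ is attained''), which you presume the authors prove for Theorem~\ref{thm:BN-style}. They do not: the paper works entirely via the \emph{Mountain Pass theorem}. Lemma~\ref{lem:MPGeometry} gives the geometry, Lemma~\ref{lem:PathLow} bounds the minimax level by $\frac{1}{n}\mathcal{S}_H^{n/p}$, and Lemma~\ref{lem:PSseq} shows that any Palais--Smale sequence at a level in $(0,\frac{1}{n}\mathcal{S}_H^{n/p})$ has a nontrivial weak limit solving the problem. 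So your assumption that the compactness lemma is already available in the paper is false; you would have to supply it yourself. Your sketch (Brezis--Lieb for the $p$-homogeneous integrand $H(\nabla\cdot)^p$, compactness for the subcritical term, Sobolev control on the defect) is the right outline and does go through, but it is an independent piece of work, not a citation. In exchange, your route yields a genuine minimizer (hence a ground state), while the paper's route only produces a mountain-pass critical point; on the other hand, the paper's Palais--Smale lemma is proved once and reused uniformly for all three existence theorems, which is an economy your approach does not share.
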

The last existence result deals with the case of
$p$-superlinear perturbations (i.e. $q>p$). To properly state it, let us introduce
a constant which already appeared before, and that will provide the proper bound between what we can call {\it high} and {\it low} dimensional cases:
\begin{equation*}
\kappa_{p,q} = \frac{p[q(p-1)+p]}{q(p-1)+p-p(p-1)}.
\end{equation*}
\begin{thm}\label{thm:PertNonlinear}
	Let $\Omega \subset \mathbb{R}^{n}$ be an open and bounded set with smooth enough 
	boundary $\partial\Omega$ and let $1<p< n$, $p<q<p^\star$. Then the following holds:
	\begin{itemize}
	\item if $n>k_{p,q}$ the problem \eqref{eq:Problem} admits a solution for every $\lambda >0$;
	\item if $1<p< n \leq k_{p,q}$, there exists a positive number $\lambda_0>0$ such that problem \eqref{eq:Problem} admits a solution for every $\lambda \geq \lambda_0$. 
	\end{itemize}
\end{thm}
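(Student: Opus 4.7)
\medskip
\noindent\textbf{Strategy.} The plan is to apply the Ambrosetti--Rabinowitz mountain pass theorem to the $C^{1}$ functional $\mathcal{J}_{q,\lambda}$. The mountain pass geometry follows from $p<q<p^\star$: the $H$-Dirichlet term dominates both the subcritical and critical terms on small spheres, so $\mathcal{J}_{q,\lambda}(u)\ge\alpha>0$ for $\|u\|=\rho$ with $\rho,\alpha$ sufficiently small, while $\mathcal{J}_{q,\lambda}(tu_0)\to-\infty$ as $t\to+\infty$ along any ray with $u_0^+\not\equiv 0$. Let $c_\lambda$ be the associated min--max level. A standard concentration--compactness argument (an anisotropic Brezis--Lieb lemma plus a splitting of bounded $(PS)$ sequences) shows that $(PS)_c$ sequences are relatively compact whenever
\[
 c<c^\ast:=\frac{1}{n}\,\mathcal{S}_H^{\,n/p},
\]
where $\mathcal{S}_H$ is the sharp anisotropic Sobolev constant, realized on $\mathbb{R}^n$ by the Aubin--Talenti functions $U_\varepsilon$ of \cite{CFR}. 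The whole proof thus reduces to proving $c_\lambda<c^\ast$ under the stated hypotheses.

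\medskip
\noindent\textbf{Truncated Aubin--Talenti test.} Fix $x_0\in\Omega$ and $\eta\in C^\infty_c(\Omega)$ with $\eta\equiv 1$ near $x_0$, and set $V_\varepsilon(x):=\eta(x)\,U_\varepsilon(x-x_0)$. Since $tV_\varepsilon$ eventually makes the functional negative, the mountain pass inequality gives $c_\lambda\le\sup_{t\ge 0}\mathcal{J}_{q,\lambda}(tV_\varepsilon)$. Using the scaling $U_\varepsilon(x)=\varepsilon^{-(n-p)/p}U_1(x/\varepsilon)$, the extremal identity for $\mathcal{S}_H$, and a careful analysis of the truncation error one expects estimates of the form
\[
 \int_\Omega H(\nabla V_\varepsilon)^p\,dx=\mathcal{S}_H\Big(\int_\Omega V_\varepsilon^{p^\star}\,dx\Big)^{p/p^\star}+O\!\left(\varepsilon^{\frac{n-p}{p-1}}\right),\quad \int_\Omega V_\varepsilon^{q}\,dx\sim c_{n,p,q}\,\varepsilon^{\,n-\frac{q(n-p)}{p}},
\]
as $\varepsilon\to 0^+$. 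Optimizing the one-variable function $t\mapsto\mathcal{J}_{q,\lambda}(tV_\varepsilon)$ then yields the expansion
\[
 \sup_{t\ge 0}\mathcal{J}_{q,\lambda}(tV_\varepsilon)=c^\ast-C_1\,\lambda\,\varepsilon^{\,n-\frac{q(n-p)}{p}}+O\!\left(\varepsilon^{\frac{n-p}{p-1}}\right),
\]
with a constant $C_1>0$ independent of $\lambda$ and $\varepsilon$.

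\medskip
\noindent\textbf{Conclusion and main obstacle.} A direct algebraic manipulation shows that $n-q(n-p)/p<(n-p)/(p-1)$ is equivalent to $n>\kappa_{p,q}$. Hence for $n>\kappa_{p,q}$ the $\lambda$-term beats the remainder, and for \emph{every} $\lambda>0$ a choice of $\varepsilon=\varepsilon(\lambda)$ small enough gives $c_\lambda<c^\ast$; the mountain pass theorem then produces a non-zero critical point, which is a weak solution of \eqref{eq:Problem} via the Weak/Strong Maximum Principle argument following Definition~\ref{def:weak_sol}. In the low-dimensional range $1<p<n\le\kappa_{p,q}$, the two powers of $\varepsilon$ satisfy the opposite inequality, so the positive remainder can only be compensated by enlarging $\lambda$: optimizing in $\varepsilon$ for each fixed $\lambda$ produces a threshold $\lambda_0>0$ beyond which $c_\lambda<c^\ast$, which gives the second assertion. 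The principal obstacle is extracting the sharp anisotropic Aubin--Talenti asymptotics above: radial symmetry is lost, so one has to work with the polar function associated to $H$ and with a layer-cake decomposition over the Wulff level sets, keeping track of the sharp constants and the correction exponents that ultimately produce the critical dimension $\kappa_{p,q}$.
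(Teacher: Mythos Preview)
Your strategy coincides with the paper's: mountain-pass geometry (Lemma~\ref{lem:MPGeometry}), a Palais--Smale analysis below $c^\ast=n^{-1}\mathcal{S}_H^{n/p}$ (Lemma~\ref{lem:PSseq}), and truncated anisotropic Aubin--Talenti functions to force the min--max level under $c^\ast$ (Lemma~\ref{lem:Path}). After reparametrizing (your $\varepsilon$ is the paper's $\varepsilon^{(p-1)/p}$) your exponents match, and the equivalence $n-q(n-p)/p<(n-p)/(p-1)\Leftrightarrow n>\kappa_{p,q}$ is exactly the one used in Case~(i) of Lemma~\ref{lem:Path}. Two comments.

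First, concentration--compactness is not needed. The paper (Lemma~\ref{lem:PSseq}) does not prove relative compactness of PS sequences; it only shows that the weak limit of a bounded PS sequence at level $\alpha\in(0,c^\ast)$ is a \emph{nontrivial} critical point, by ruling out $u\equiv 0$ through the anisotropic Sobolev inequality. That suffices for existence and is lighter than a full Brezis--Lieb/splitting argument.

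Second, your handling of the low-dimensional case $n\le\kappa_{p,q}$ has a genuine gap. The expansion $\sup_{t}\mathcal{J}_{q,\lambda}(tV_\varepsilon)=c^\ast-C_1\lambda\,\varepsilon^{a}+O(\varepsilon^{b})$ with $C_1$ \emph{independent of $\lambda$} is not valid: the $\lambda$-term enters as $c\lambda\,t_{\varepsilon,\lambda}^{q}\,\varepsilon^{a}$, where $t_{\varepsilon,\lambda}$ is the maximiser of $t\mapsto\mathcal{J}_{q,\lambda}(tV_\varepsilon)$, and $t_{\varepsilon,\lambda}\to 0$ as $\lambda\to\infty$ (indeed $\lambda\,t_{\varepsilon,\lambda}^{q}\sim\lambda^{-p/(q-p)}\to 0$). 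So enlarging $\lambda$ in your expansion does not help, and ``optimizing in $\varepsilon$'' cannot be carried out on a uniform inequality. The paper's argument for this regime (Lemma~\ref{lem:Path}, Case~(ii)) does not compare powers of $\varepsilon$ at all: one \emph{fixes} a small $\varepsilon>0$, reads off from the critical-point identity
\[
\mathcal{S}_H^{n/p}+c\,\varepsilon^{(n-p)/p}
= t_{\varepsilon,\lambda}^{\,p^\star-p}\big(\mathcal{S}_H^{n/p}-c\,\varepsilon^{n/p}\big)
+c\lambda\,t_{\varepsilon,\lambda}^{\,q-p}\,\varepsilon^{\beta_{p,q,n}}
\]
that $t_{\varepsilon,\lambda}\to 0$ as $\lambda\to\infty$, and then bounds
\[
0\le\sup_{t\ge 0}\mathcal{J}_{q,\lambda}(tV_\varepsilon)\le g(t_{\varepsilon,\lambda})
\le\frac{t_{\varepsilon,\lambda}^{\,p}}{p}\big(\mathcal{S}_H^{n/p}+c\,\varepsilon^{(n-p)/p}\big)
\longrightarrow 0<c^\ast,
\]
which produces $\lambda_0=\lambda_0(\varepsilon)$ without any optimization in $\varepsilon$.
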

It is worth noting that, since $k_{p,p}=p^2$, the dichotomy
in Theorem \ref{thm:PertNonlinear} is
coherent with the threshold found when $q=p$; moreover, we point out that
\begin{equation*}
p<\kappa_{p,q}< p^2, \quad \textrm{for every } q\in (p,p^{\star}).
\end{equation*}
The constant $\kappa_{p,q}$ already appears implicitly in \cite{AG} (see condition (7) in there).
\vspace*{0.1cm}

As already highlighted, the crucial results needed to prove all the existence results listed above are the following:
\begin{itemize}
\item precise estimates of a suitable modification of the anisotropic Aubin-Talenti functions (Lemma \ref{lem:talentiane});
\item geometric structure of the functional $\mathcal{J}_{q,\lambda}$ (Lemma \ref{lem:MPGeometry});
\item convergence of Palais-Smale sequences at the proper energy-level to weak solutions of \eqref{eq:Problem} (Lemma \ref{lem:PSseq});
\item find explicit paths below a certain energy-level depending on the best anisotropic Sobolev constant $\mathcal{S}_{H}$ defined in \eqref{eq:AnisotropicSobolev} (Lemma \ref{lem:Path_q=p} for the case $q=p$ and $n>p^{2}$, Lemma \ref{lem:PathLow} or the case $q=p$ and $p<n<p^{2}$ and Lemma \ref{lem:Path} when $p<q<p^{\star}$).
\end{itemize}

Together with the existence results just stated, we
also establish the following \emph{non-exi\-stence} result.
In order to avoid regularity issues, we 
limit ourselves to consider the case
of regular solutions in star-shaped domains; this
allows us to exploit the anisotropic Pohozaev identity recently proved in \cite{MS} 
(see also \cite[Lemma 4.2]{CiSal}).
\begin{thm}\label{thm:Non_Existence}
Let $1<p<n$, and let $\Omega\subset\mathbb{R}^n$ be a bounded and star-shaped domain with smooth boundary.
If $\lambda\leq 0$, then \emph{there do not exist regular weak solutions} $u\in C^1(\overline{\Omega})$
of
problem \eqref{eq:Problem} with $q = p$, that is,
\begin{equation}\label{eq:Problem_general}
\left\{\begin{array}{rl}
-\Delta^{H}_{p}u = u^{p^{\star}-1} + \lambda u^{p-1} & \textrm{in   } \Omega,\\
u >0 & \textrm{in } \Omega,\\
u=0 & \textrm{on } \partial \Omega.
\end{array}\right.
\end{equation}
\end{thm}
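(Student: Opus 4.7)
My plan is to derive an immediate contradiction from the anisotropic Pohozaev identity of \cite{MS} (see also \cite[Lemma 4.2]{CiSal}), which is available precisely because the solution $u$ is assumed to be of class $C^1(\overline{\Omega})$. The key algebraic miracle is that, as in the classical Brezis--Nirenberg argument, the choice $q=p$ combined with the critical power $p^\star=np/(n-p)$ causes the bulk terms in the identity to cancel \emph{exactly}, leaving only a boundary contribution whose sign is dictated by the star-shape geometry of $\Omega$.

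Concretely, writing \eqref{eq:Problem_general} as $-\Delta^H_p u = f(u)$ with $f(s)=s^{p^\star-1}+\lambda s^{p-1}$ and $F(s)=s^{p^\star}/p^\star+(\lambda/p)\,s^p$, and translating so that $\Omega$ is star-shaped with respect to the origin, the anisotropic Pohozaev identity takes the form
\[
\int_\Omega \Big[\, n F(u) - \frac{n-p}{p}\, u f(u) \,\Big]\, dx \;=\; \frac{p-1}{p}\int_{\partial\Omega} H(\nabla u)^p\, \langle x,\nu\rangle\, d\sigma,
\]
with $\nu$ the outer unit normal to $\partial\Omega$. Using the identity $n/p^\star=(n-p)/p$, the coefficient of $\int_\Omega u^{p^\star}\,dx$ on the left-hand side drops to zero, while the coefficient of $\lambda \int_\Omega u^p\,dx$ simplifies to $1$, so that the whole identity collapses to
\[
\lambda \int_\Omega u^p\, dx \;=\; \frac{p-1}{p}\int_{\partial\Omega} H(\nabla u)^p\, \langle x,\nu\rangle\, d\sigma.
\]

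To conclude, I would invoke the star-shape hypothesis: $\langle x,\nu\rangle\ge 0$ on $\partial\Omega$, with strict inequality on a set of positive surface measure. Combined with $H>0$ on $\R^n\setminus\{0\}$, this makes the right-hand side nonnegative. When $\lambda<0$ the left-hand side is strictly negative (since $u>0$ in $\Omega$ gives $\int_\Omega u^p>0$), an immediate contradiction. For the borderline case $\lambda=0$, the left-hand side vanishes and one needs the boundary integral to be \emph{strictly} positive; this is ensured by a Hopf-type boundary-point lemma for $-\Delta^H_p$ applied to the positive solution $u$, which forces $\nabla u\neq 0$ on $\partial\Omega$, and hence $H(\nabla u)>0$ there.

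The main difficulty I anticipate is not in the contradiction itself, which is essentially algebraic once the Pohozaev identity is in hand, but rather in carefully matching the normalization of the identity of \cite{MS} to the form of our operator $-\Delta^H_p$ and to our notion of weak solution (Definition \ref{def:weak_sol}), and in invoking the correct version of the anisotropic Hopf boundary-point lemma to settle the threshold case $\lambda=0$.
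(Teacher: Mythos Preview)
Your proposal is correct and follows essentially the same route as the paper: apply the anisotropic Pohozaev identity from \cite{MS}, reduce to the equality $\lambda\int_\Omega u^p\,dx=\frac{p-1}{p}\int_{\partial\Omega}H(\nabla u)^p\langle x,\nu\rangle\,d\sigma$, and conclude via star-shapedness together with the anisotropic Hopf lemma. The only cosmetic difference is that the paper writes the identity of \cite{MS} with the raw boundary terms and then simplifies them using $\nabla u=\langle\nabla u,\nu\rangle\nu$ on $\partial\Omega$ and the homogeneity of $H,\nabla H$, and it replaces $\int_\Omega H(\nabla u)^p$ via the weak formulation tested against $u$ rather than computing $nF(u)-\frac{n-p}{p}uf(u)$ directly as you do; both routes yield the same final identity.
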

Notice that Theorem \ref{thm:Non_Existence} somehow justifies the choice $\lambda > 0$
in our existence results
(at least when regular solutions and star-shaped domains are involved).
\medskip

%\textcolor{red}{As far as we know, the low dimensional case (i.e. $1<p<n < k_{p,q}$) when $p<q<p^{\star}$ has been not explicitly written down neither for the Euclidean $p$-Laplacian. LO TOGLIEREI, che dite?}

\medskip

\noindent The paper is organized as follows:

\begin{itemize}
	\item In Section \ref{sec:Prel} we recall some notions about Finsler geometry, 
	and we prove some technical lemmas that will be crucial in the proof of the main results. In particular, 
	inspired by the celebrated paper of Brezis-Nirenberg, we prove some preliminary
	 estimates of suitable truncation of the Aubin-Talenti bubbles.

	\item Using all the preliminary results established/recalled
	in Section \ref{sec:Prel}, we provide
	in Section \ref{sec:Proof_BN} the proofs
	our \emph{existence results}, namely Theorems 
	 \ref{thm:BN-style}, \ref{thm_AG} and \ref{thm:PertNonlinear}.
	
	\item Finally, in Section \ref{sec:NonExist}
	we prove nonexistence result in Theorem \ref{thm:Non_Existence}. 
\end{itemize}

\section{Preliminaries}\label{sec:Prel}
The aim of this first section is to collect all the
notions and the preliminary results
which will be repeatedly used throughout the rest of the paper. 
Due to the number of topics we will discuss,
for the sake of readability we split our presentation into
several paragraphs.
\medskip

\noindent\textbf{i)\,\,The anisotropic geometry.}
In view of its (major) role in our results,
we begin this section with an overview of
the \emph{anisotropic geometry} induced on $\R^n$ by the norm $H$. 
\medskip

To begin with, 
we notice that assumptions $(i)$-to-$(iv)$ made on $H$ imply some simple consequences that we collect in the following Lemma for a future reference.

\begin{lem}\label{lem:propHeasy}
Let $H:\mathbb{R}^{n}\to \mathbb{R}$ be such that $(i)$-to-$(iv)$ hold. Then
\begin{enumerate}
  \item[{1)}] The function $H$ is \emph{globally equivalent} to the Euclidean norm $|\cdot|$, that is,
  there exists a universal constant $\nu\geq 1$ such that
  \begin{equation} \label{eq:HequivalentEu}
   \nu^{-1}|x|\leq H(x)\leq \nu|x|\quad\text{for every $x\in\R^n$}.
  \end{equation}
    
  \item[2)] The function $H$ satisfies the classical triangle inequality, that is,
  \begin{equation}\label{eq:triagnleH}
   H(x+y)\leq H(x)+H(y)\quad \text{for every $x,y\in\R^n$}.
  \end{equation}
  
   \item[3)] The function $H$ is \emph{globally Lipschitz-continuous} in $\R^n$.
   
\item[4)] The function $H$ satisfies the following Euler-type identity:
  \begin{equation} \label{eq:EulertypeH}
   \langle x,\nabla H(x)\rangle = H(x)\quad\text{for every $x\in\R^n\setminus\{0\}$}.
  \end{equation}
  \item[5)] There exists a universal constant $\vartheta\geq 1$ such that
  \begin{equation} \label{eq:nablaHBd}
   \vartheta^{-1}\leq |\nabla H(x)|\leq \vartheta\quad\text{for every $x\in\R^{n}\setminus\{0\}$}.
  \end{equation}
 \end{enumerate}     
  \begin{proof}
  1) follows from \emph{(iii)}-\emph{(iv)}, indeed we can find $\nu\geq 1$ such that
  $$\nu^{-1}\leq H(x)\leq \nu\quad\text{for every $x\in\R^n$ with $|x| = 1$};$$
  this, together with the homogeneity property \emph{(ii)}, readily implies \eqref{eq:HequivalentEu}.\\

 2) follows from \emph{(i)}-\emph{(ii)}, indeed for every $x,y\in\R^n$ we have
  $$\frac{1}{2}H(x+y) = H\Big(\frac{1}{2}x+\frac{1}{2}y\Big) \leq \frac{1}{2}H(x)+\frac{1}{2}H(y),$$
  and this immediately gives the claimed \eqref{eq:triagnleH}.\\
  
3) follows  combining \eqref{eq:HequivalentEu} with \eqref{eq:triagnleH}, indeed we obtain the following estimates, holding true for every $x,y\in\R^n$:
  \vspace*{0.05cm}
  
  \emph{a)}\,\,$H(x)-H(y)\leq H(x-y)\leq \nu|x-y|$;
  
  \emph{b)}\,\,$H(y)-H(x)\leq H(y-x)\leq \nu|y-x| = \nu|x-y|$.
  \vspace*{0.05cm}
  
  \noindent Gathering \emph{a)} and \emph{b)}, we then conclude that
  \begin{equation} \label{eq:HLipschitz}
   |H(x)-H(y)|\leq \nu|x-y| \quad \text{for every $x,y\in\R^n$},
  \end{equation}
  and this proves that $H$ is globally Lipschitz-continuous in $\R^n$.\\

4) follows from \emph{(ii)} and \emph{(iv)}, indeed, given any $x\in\R^n\setminus\{0\}$, we have
  \begin{align*}
   H(x) = \frac{\d}{\d t}\Big|_{t = 1}H(tx) = \langle \nabla H(x),x\rangle,
  \end{align*}
  and this is precisely the claimed \eqref{eq:EulertypeH}.\\
Finally, to prove \eqref{eq:nablaHBd} we first observe that, by 
  combining \eqref{eq:EulertypeH} with property \emph{(iii)}, we de\-rive that 
  $\text{$\nabla H(x)\neq 0$ for every $x\in\R^n\setminus\{0\}$};$
  thus, since $|\nabla H|\in C(\R^n\setminus\{0\})$
  \emph{(}see property \emph{(iv)}\emph{)}, there exists
  a u\-ni\-ver\-sal constant $\vartheta
  \geq 1$ such that
  $$(\star)\qquad\quad
  0 < \vartheta^{-1}\leq |\nabla H(x)|\leq \vartheta\quad\text{for every $x\in\R^n$ with $|x| = 1$}.$$
  On the other hand, since $H$ is positively homogeneous of degree $1$
  \emph{(}see property \emph{(ii)}\emph{)}, a direct computation shows that
  $|\nabla H|$ is positively homogeneous of degree $0$; this,
  together with $(\star)$, readily gives the desired \eqref{eq:nablaHBd}.
  \end{proof}
\end{lem}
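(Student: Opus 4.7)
The plan is to exploit the interplay between the $1$-homogeneity of $H$ and the compactness of the unit sphere $S^{n-1}\subset\R^n$: properties (1), (2), and (4) follow almost directly from the four axioms, property (3) is then a quick consequence of (1) and (2), and property (5) will require one further ingredient supplied by (4).

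I would first handle (1), (2), and (4). For (1), restricting $H$ to the compact sphere $S^{n-1}$ and using continuity (from (iv)) together with strict positivity (iii) yields uniform bounds $\nu^{-1}\leq H(x)\leq \nu$ for $|x|=1$; the $1$-homogeneity (ii) then transfers these bounds to all of $\R^n$. For (2), convexity (i) gives $H\bigl(\tfrac{x+y}{2}\bigr)\leq \tfrac{1}{2}H(x)+\tfrac{1}{2}H(y)$, and multiplying by $2$ via (ii) produces the claimed triangle inequality. For (4), differentiating the identity $H(tx)=tH(x)$ in $t$ at $t=1$ (legitimate for $x\neq 0$ by (iv)) immediately gives the Euler-type relation.

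Next I would turn to (3) and (5). For (3), I use (2) to write $H(x)-H(y)\leq H(x-y)$, then apply (1) to bound the right-hand side by $\nu|x-y|$; symmetry in $x,y$ yields the global Lipschitz estimate. For (5), the key observation is that $\nabla H$ is positively $0$-homogeneous (obtained by differentiating (ii) in $x$) and continuous on $\R^n\setminus\{0\}$ by (iv), so it suffices to bound $|\nabla H|$ from above and below on the compact set $S^{n-1}$. The upper bound is immediate from continuity there. The only delicate point, and what I regard as the main obstacle, is the strict positivity of $|\nabla H|$: a priori $\nabla H$ might vanish somewhere on $S^{n-1}$, and continuity alone cannot exclude this. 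The remedy is property (4), which through the Cauchy--Schwarz inequality yields
\[
 |\nabla H(x)|\cdot |x| \;\geq\; \langle x,\nabla H(x)\rangle \;=\; H(x)\;\geq\; \nu^{-1}\qquad \text{for } x\in S^{n-1},
\]
ruling out any such vanishing and producing the required lower bound on $|\nabla H|$.
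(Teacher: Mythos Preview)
Your proof is correct and follows essentially the same approach as the paper's own argument: parts (1)--(4) are handled identically, and for (5) both you and the authors use the Euler identity (4) to rule out vanishing of $\nabla H$ on $S^{n-1}$, then extend by $0$-homogeneity. The only cosmetic difference is that you extract the lower bound on $|\nabla H|$ explicitly via Cauchy--Schwarz (giving $|\nabla H(x)|\geq \nu^{-1}$ on $S^{n-1}$), whereas the paper invokes compactness and continuity after noting $\nabla H\neq 0$.
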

\medskip

Moreover, we mention that since $H$ is positive $1$-homogeneous, then $H^2$ is positive $2$-homogeneous, so there exists $\Upsilon>0$ such that
$$\langle \frac{1}{2} \nabla^2H^2(\xi) \eta, \eta\rangle \leq \Upsilon |\eta|^2 \text{ for all } \xi \in \R^n\setminus \{0\}, \ \eta \in \R^n,$$
i.e. the largest eigenvalue of the matrix $\frac{1}{2} \nabla^2H^2(\xi)$ is uniformly bounded from above for $\xi \neq 0$ by some positive constant $\Upsilon > 0$. Furthermore, it is possible to show that $(v)$ is equivalent to prove that there exists $\upsilon>0$ such that
$$\langle \frac{1}{2} \nabla^2H^2(\xi) \eta, \eta\rangle \geq \upsilon |\eta|^2 \text{ for all } \xi \in \R^n\setminus \{0\}, \ \eta \in \R^n,$$
see \cite{AnCiCiFaMa, AnCiFa, CoFaVa2} for further details. Summing up, we have the following uniformly ellipticity condition
\begin{equation}\label{eq:ellipticity} \tag{$v'$}
	\upsilon |\eta|^2 \leq \langle \frac{1}{2} \nabla^2H^2(\xi) \eta, \eta\rangle \leq \Upsilon |\eta|^2 \text{ for all } \xi \in \R^n\setminus \{0\}, \ \eta \in \R^n.
\end{equation}
In what follows, an important role will be played by the {\it dual norm} $H_0: \mathbb{R}^n \to \mathbb{R}$, which is defined as
\begin{equation}\label{eq:DualH0}
H_0 (\zeta):= \sup_{H(\xi)=1}\langle \zeta, \xi \rangle,
\end{equation}
where $\langle \cdot,\cdot \rangle$ denotes the standard Euclidean scalar product. We will also need the map
\begin{equation}\label{eq:hatH0}
\hat{H}_0:\mathbb{R}^n \to \mathbb{R},\qquad
\hat{H}_0 (\zeta):= H_0(-\zeta).
\end{equation}

\begin{rem} \label{rem:propertiesH0} 
It is not difficult to recognize that, since $H$ satisfies properties $(i)$-to-$(iv)$,
both the functions $H_0$ and $\hat{H}_0$ satisfy properties $(i)$-to-$(iii)$;
thus, \emph{all} the properties \emph{1)}-to-\emph{4)} in Lemma
\ref{lem:propHeasy} \emph{(}which do not exploit the regularity of $H$\emph{)} also hold
for $H_0$ and $\hat{H}_0$.
\end{rem}

Concerning the validity of property $(iv)$, namely the smoothness of $H_0$, we have that this is direct consequence of assumption $(v)$ or $(v')$, for which we refer to Corollary 1.7.3 in \cite{Sch}. 
\begin{lem}[Corollary 1.7.3 in \cite{Sch}] Let $H:\mathbb{R}^{n}\to \mathbb{R}$ be such that $(i)$-to-$(iv)$ hold. Then
$$H_0\in C^1(\R^n\setminus\{0\})\,\,
\text{ if and only if }\,\,\text{$B_1^H := \{x\in\R^n:\,H(x)<1\}$ is uniformly convex  }.
$$
Hence, the regularity of the dual norm $H_0$ is equivalent to assumption $(v)$.
\end{lem}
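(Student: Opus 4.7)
The plan is to reduce the claimed equivalence to the classical duality between a convex body and its support function, as recorded in \cite[Corollary 1.7.3]{Sch}, applied to the Wulff shape $K := \overline{B_1^H}$. The bulk of the work is simply the identification of $H_0$ with the support function $h_K$ of $K$; once this is done, Schneider's corollary yields the conclusion.

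First I would set $K := \overline{B_1^H}$ and check that it is a compact convex body with $0$ in its interior and $C^2$ boundary. Compactness and the inclusion $0\in\mathrm{int}(K)$ follow from $(i)$-$(iii)$ together with Lemma~\ref{lem:propHeasy}(1), while $(iv)$ combined with the Euler-type identity \eqref{eq:EulertypeH} (which prevents $\nabla H$ from vanishing on $\partial K$) implies that $\partial K = \{H=1\}$ is a $C^2$ hypersurface. Next, using the definition \eqref{eq:DualH0}, the positive $1$-homogeneity of $H$, and the fact that a linear functional attains its maximum over a compact convex body on its boundary, I would verify that for every $\zeta\in\R^n$
$$
H_0(\zeta) \;=\; \sup_{H(\xi)=1}\langle \zeta,\xi\rangle \;=\; \sup_{\xi\in K}\langle \zeta,\xi\rangle \;=\; h_K(\zeta).
$$

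I would then invoke \cite[Corollary 1.7.3]{Sch}: for a compact convex body $K\subset\R^n$ with $0\in\mathrm{int}(K)$, the support function $h_K$ belongs to $C^1(\R^n\setminus\{0\})$ if and only if $K$ is strictly convex, meaning that each supporting hyperplane of $K$ touches $\partial K$ at exactly one point. Because $\partial K$ is here of class $C^2$, strict convexity is equivalent to the pointwise positivity of all principal curvatures of $\partial K$; by compactness of $\partial K$ this is, in turn, equivalent to their uniform positivity, i.e.\ to the uniform convexity of $K=\overline{B_1^H}$ in the sense of the footnote to $(v)$. Combined with the identification $H_0 = h_K$, this yields both implications of the lemma.

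The genuine technical content sits entirely inside Schneider's corollary, and if one wanted a self-contained argument the main obstacle would be the forward direction \emph{(strict convexity $\Rightarrow h_K\in C^1$)}. The standard envelope-type route runs as follows: strict convexity forces the supremum defining $h_K(\zeta)$ to be attained at a unique boundary point $x(\zeta)\in\partial K$ depending continuously on $\zeta\neq 0$ (by a compactness and uniqueness argument); one then verifies that $\nabla h_K(\zeta) = x(\zeta)$, whence $h_K\in C^1(\R^n\setminus\{0\})$. The converse is easier: if $\partial K$ admitted a flat face supported by some direction $\zeta_0$, the set of maximizers of $\langle \zeta_0,\cdot\rangle$ over $K$ would be a nondegenerate segment, producing a corner of $h_K$ at $\zeta_0$ and hence failure of differentiability there.
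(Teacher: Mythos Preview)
The paper itself gives no proof of this lemma: it is simply quoted as Corollary~1.7.3 of \cite{Sch}. Your strategy---identifying $H_0$ with the support function $h_K$ of the Wulff body $K=\overline{B_1^H}$ and then invoking Schneider's corollary---is precisely the way one unpacks the citation, and the identification $H_0=h_K$ is carried out correctly.

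There is, however, a genuine gap in the passage from Schneider's statement to the lemma as written. Schneider's Corollary~1.7.3 gives the equivalence
\[
h_K\in C^1(\R^n\setminus\{0\})\quad\Longleftrightarrow\quad K\ \text{is \emph{strictly} convex},
\]
not \emph{uniformly} convex. You try to bridge the two notions by asserting that, for a compact $C^2$ convex body, strict convexity is equivalent to pointwise positivity of all principal curvatures. Only one implication is true: positive principal curvatures certainly force strict convexity, but a $C^2$ strictly convex body may well have points where some principal curvature vanishes (already in the plane, the region $\{y\geq x^4\}$ near the origin gives a strictly convex $C^\infty$ boundary with zero curvature at $0$). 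Consequently your argument establishes
\[
B_1^H\ \text{uniformly convex}\ \Longrightarrow\ H_0\in C^1(\R^n\setminus\{0\}),
\]
which is exactly the implication the paper actually uses afterwards, but it does \emph{not} establish the converse. In fact the converse, with ``uniformly convex'' in place of ``strictly convex'', is not what Schneider's corollary asserts; the biconditional in the lemma appears to be a slight overstatement relative to the cited source. Your proof of the useful direction is fine; for the reverse direction you would need either to weaken ``uniformly'' to ``strictly'', or to supply an independent argument that does not go through the false equivalence.
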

Moreover, thanks to the previous result, we have that $H$ and $H_0$ satisfy the following identities.
\begin{lem}[Lemma 3.1  in \cite{CiSal}]
Assume that $H$ satisfies assumption (i)-(v). Then, the following notable identities hold
\begin{equation}\label{eq:HGradH0}
 H(\nabla H_0(\zeta))\equiv 1,\qquad H_0(\nabla H(\zeta)) \equiv 1,
\end{equation}
for every  $\zeta\neq 0$.
\end{lem}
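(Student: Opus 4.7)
The strategy is to establish both identities simultaneously by exploiting the \emph{Cauchy--Schwarz-type inequality} relating $H$ and $H_0$: directly from the definition \eqref{eq:DualH0} of the dual norm one has $\langle \zeta, \xi\rangle \leq H(\xi)\, H_0(\zeta)$ for all $\xi, \zeta \in \R^n$, with equality attained for suitable $\xi$ (by compactness of $\{H = 1\}$, which follows from Lemma \ref{lem:propHeasy}). Under assumption $(v)$, the dual norm $H_0$ is of class $C^1$ away from the origin by the cited Corollary 1.7.3 in \cite{Sch}, and a symmetric argument shows that the dual of $H_0$ is $H$ itself, so that an analogous inequality holds with the roles of $H$ and $H_0$ interchanged.

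To prove $H_0(\nabla H(\zeta)) = 1$, I fix $\zeta \neq 0$. By the Euler identity \eqref{eq:EulertypeH} applied to $H$, we have $\langle \nabla H(\zeta), \zeta\rangle = H(\zeta)$. Taking $\xi_\zeta := \zeta/H(\zeta)$, I obtain $H(\xi_\zeta) = 1$ and $\langle \nabla H(\zeta), \xi_\zeta\rangle = 1$, so $H_0(\nabla H(\zeta)) \geq 1$ by the definition \eqref{eq:DualH0}. For the reverse inequality, I would exploit the convexity of $H$: for every $\eta \in \R^n$,
\begin{equation*}
H(\eta) \geq H(\zeta) + \langle \nabla H(\zeta), \eta - \zeta\rangle = \langle \nabla H(\zeta), \eta\rangle,
\end{equation*}
where the last equality again uses the Euler identity. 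Restricting to $\eta$ with $H(\eta) = 1$ and taking the supremum yields $H_0(\nabla H(\zeta)) \leq 1$.

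To prove $H(\nabla H_0(\zeta)) = 1$, I repeat the same reasoning with the roles of $H$ and $H_0$ interchanged. The key ingredients are: (a) $H_0$ is convex and positively $1$-homogeneous (immediate from its definition as a supremum of linear functionals indexed by the compact set $\{H = 1\}$), so the Euler identity $\langle \nabla H_0(\zeta), \zeta\rangle = H_0(\zeta)$ holds pointwise on $\R^n \setminus \{0\}$ thanks to the $C^1$-regularity of $H_0$ granted by $(v)$; (b) the \emph{biduality} identity $H(\xi) = \sup_{H_0(\zeta) = 1}\langle \xi, \zeta\rangle$, i.e. $(H_0)_0 = H$. With (a) and (b) in hand, the very same convexity-plus-Euler argument as above produces $H(\nabla H_0(\zeta)) = 1$.

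The only delicate point is establishing the biduality $(H_0)_0 = H$, which I expect to be the main obstacle. I would argue as follows: by convexity, positivity and $1$-homogeneity, $H$ is the Minkowski gauge of the convex body $\overline{B_1^H}$ (which contains the origin in its interior, by continuity and Lemma \ref{lem:propHeasy}); a direct computation shows that the polar body of $\overline{B_1^H}$ is exactly $\overline{B_1^{H_0}}$, and the bipolar theorem, a consequence of the Hahn--Banach separation theorem applied to the closed convex set $\overline{B_1^H}$, ensures that the double polar returns the original body. Passing back to Minkowski gauges through the common $1$-homogeneity of $H$ and $(H_0)_0$ finally translates the equality of unit balls into the pointwise identity $H = (H_0)_0$, which concludes the proof.
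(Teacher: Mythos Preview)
Your argument is correct. The paper itself does not supply a proof of this lemma: it is simply stated and attributed to \cite[Lemma 3.1]{CiSal}. Your proposal, by contrast, gives a complete self-contained argument via the Euler identity and convexity of $H$ for the inequality $H_0(\nabla H(\zeta))=1$, and then invokes biduality $(H_0)_0=H$ (justified through the bipolar theorem for closed convex bodies containing the origin) to obtain the symmetric identity $H(\nabla H_0(\zeta))=1$. Both steps are standard and sound; the only place one might want a word of care is that biduality in this asymmetric setting (recall $H$ is not assumed even) requires the polar to be taken with respect to the appropriate pairing, but since the paper's $H_0$ is defined exactly as $\sup_{H(\xi)=1}\langle\zeta,\xi\rangle$, your bipolar argument goes through verbatim.
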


\noindent\textbf{ii)\,\,Anisotropic Talenti functions.} 
Since we are dealing with critical problems related to the anisotropic $p$-Laplacian $-\Delta_{p}^{H}$, it is crucial to recall the basic results related to what we can call {\em anisotropic Talenti functions}. Recently, in \cite{CFR}, Ciraolo, Figalli and the third named author considered the following problem 
\begin{equation}\label{eq:ProblemCriticalRn}
	\left\{\begin{array}{rl}
		-\Delta^{H}_{p}u = u^{p^{\star}-1}  & \textrm{in   } \R^n,\\
		u >0 & \textrm{in } \R^n,
	\end{array}\right.
\end{equation}
where $u \in D^{1,p}(\R^n)$. There it has been proved that the only energy solutions of \eqref{eq:ProblemCriticalRn} are of the following
form
\begin{equation}\label{eq:HTalentiane}
U^{H}_{\mu,x_0}(x):= \left( \dfrac{\mu^{1/p-1} c_{n,p}}{\mu^{p/p-1}+ \hat{H}_{0}(x-x_0)^{p/p-1}}\right)^{\tfrac{n-p}{p}}, 
\end{equation} 
\noindent for some $\mu>0$, $x_0 \in \mathbb{R}^n$ and
\begin{equation*}
c_{n,p} := n^{1/p} \left(\dfrac{n-p}{p-1}\right)^{\tfrac{p-1}{p}}.
\end{equation*}
We point out that assumption \cite[(1.11)]{CFR} is in force since it is equivalent to assume $(v')$, and hence $(v)$.
The family of functions defined in \eqref{eq:HTalentiane} are also 
\emph{extremals  of the anisotropic Sobolev inequality} (see \cite[Appendix A]{CFR})
\begin{equation} \label{eq:AnisotropicSobolev} 
\mathcal{S}_{H} \|u\|^p_{L^{p^{\star}}(\mathbb{R}^n)} \leq \|H(\nabla u)\|^p_{L^{p}(\mathbb{R}^n)},
\end{equation}
\noindent and coincide with the Aubin-Talenti functions (see, e.g., \cite{Aub, CNV, Tal}) 
when $H(\cdot) = |\cdot|$. 
We point out that $\mathcal{S}_H$ denotes the best Sobolev constant which depends only on $n$ and $p$.
\medskip

For future use, we recall that
\begin{equation*}
\begin{aligned}
\nabla U^{H}_{1,0}(x)&= c_{n,p}^{(n-p)/p} \nabla \left( \left( 1+H_{0}(-x)^{p/p-1}\right)^{(p-n)/p}\right) \\
&=c_{n,p}^{(n-p)/p} \left( \dfrac{n-p}{p-1}\right) \left( 1+H_{0}(-x)^{p/p-1}\right)^{-n/p}H_{0}(-x)^{1/(p-1)}\nabla H_{0}(-x),
\end{aligned}
\end{equation*}
\noindent and hence
\begin{equation}\label{eq:HGradU10}
H(\nabla U^{H}_{1,0}(x))^p = c_{n,p}^{n-p}\left( \dfrac{n-p}{p-1}\right)^p \dfrac{H_0(-x)^{p/(p-1)}}{\left(1+H_{0}(-x)^{p/(p-1)}\right)^n}, 
\end{equation}
\noindent where we used \eqref{eq:HGradH0}.

Finally, let us recall that the Aubin-Talenti bubbles defined in \eqref{eq:HTalentiane}, for every $\mu>0$ satisfy the following
\begin{equation}\label{eq:PhoR^N}
	\|H(\nabla U^{H}_{\mu,x_0})\|_{L^p(\R^n)}^p = \|U^{H}_{\mu,x_0}\|_{L^{p^\star}(\R^n)}^{p^\star}=\mathcal{S}_H^{n/p},
	\end{equation}
where $\mathcal{S}_H$ is the best Sobolev constant in the critical Sobolev embedding $D^{1,p}(\R^n)$ in $L^{p^\star}(\R^n)$. The previous identity is a direct consequence of the  Pohozaev's identity \cite[see Theorem 1.3]{MS}.
\medskip

 Following the strategy in \cite{BN} and \cite{GAP}, and supposing $0\in \Omega$ (w.l.o.g. due to the translation invariance of \eqref{eq:Problem}), we have are able to deduce the following estimate for suitable truncations of the Aubin-Talenti bubbles.

\begin{lem}\label{lem:talentiane}
	Let $\varphi \in C^{\infty}_{0}(\Omega)$ be a cut-off function such that $0\leq \varphi\leq 1$ and $\varphi =1$ in $B_r(0)\subset\subset \Omega$. Let us further define the function
	\begin{equation}\label{eq:etaepsilon}
		\eta_{\varepsilon}(x):= \dfrac{\varphi(x)}{\left(\varepsilon + H_0(-x)^{p/p-1} \right)^{(n-p)/p}}, \quad \textrm{for }\varepsilon >0.
	\end{equation}
	Then, the following hold:
	\begin{equation}\label{eq:Stima_H_nabla_eta}
		\|H(\nabla \eta_{\varepsilon})\|^p_{L^{p}(\Omega)} = \dfrac{1}{c_{n,p}^{n-p} \varepsilon^{(n-p)/p}} \|H(\nabla U^H_{1,0})\|^p_{L^{p}(\mathbb{R}^n)} + O(1), \quad \textrm{as } \varepsilon \to 0^{+}.
	\end{equation}
	\begin{equation}\label{eq:Stima_eta_p_star}
		\int_{\Omega}\eta_{\varepsilon}^{p^{\star}}\, dx = \dfrac{1}{c_{n,p}^n\varepsilon^{n/p}} \|U^{H}_{1,0}\|^{p^{\star}}_{L^{p^{\star}}(\mathbb{R}^n)} + O(1), \quad \textrm{as } \varepsilon \to 0^+.
	\end{equation}
	Moreover, as $\varepsilon \to 0^+$, we have 
	\begin{equation}\label{eq:Stima_eta_p}
		\int_{\Omega}\eta_{\varepsilon}^p \, dx 
		= \left\{ \begin{array}{rl}
			\dfrac{1}{c_{n,p}^{n-p} \varepsilon^{(n-p^2)/p}}\|U^{H}_{1,0}\|^p_{L^{p}(\mathbb{R}^n)} + O(1) & \textrm{if } n>p^2,\\
			\frac{\omega(p-1)}{p}|\log(\e)|+O(1) &\textrm{if } n=p^2,
		\end{array}\right.
	\end{equation}
	and for $p<q<p^\star-1$ we have
	\begin{equation}\label{eq:Stima_eta_q}
		\begin{split}
			\int_{\Omega} \eta_{\varepsilon}^{q}\, dx 
			& \geq \frac{c}{\varepsilon^{\frac{q(n-p)}{p}-\frac{n(p-1)}{p}}},
		\end{split}
	\end{equation}
	where $c$ is a positive constant depending only on $n$ and $p$.
	\end{lem}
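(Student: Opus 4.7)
The plan is to decompose $\Omega = B_r(0)\cup(\Omega\setminus B_r(0))$: on the inner ball $\varphi\equiv 1$ and $\eta_\varepsilon$ coincides, up to an explicit scalar, with a suitably rescaled anisotropic Aubin--Talenti bubble, whereas on the outer region $H_0(-x)$ is bounded below away from zero and hence $\eta_\varepsilon$ and $\nabla\eta_\varepsilon$ are uniformly bounded in $\varepsilon$. Consequently, all four $\Omega\setminus B_r(0)$-contributions are $O(1)$ and absorbed into the error, and the whole argument reduces to a careful asymptotic of the $B_r(0)$-integrals.

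Setting $\mu := \varepsilon^{(p-1)/p}$ and matching against the explicit formula \eqref{eq:HTalentiane}, a direct computation gives
$$\eta_\varepsilon(x) \;=\; \frac{1}{c_{n,p}^{(n-p)/p}\,\varepsilon^{(n-p)/p^{2}}}\,U^H_{\mu,0}(x),\qquad x\in B_r(0),$$
so that each of the integrands $H(\nabla\eta_\varepsilon)^p$, $\eta_\varepsilon^{p^\star}$, $\eta_\varepsilon^p$, $\eta_\varepsilon^q$ is an explicit $\varepsilon$-power times the corresponding integrand of $U^H_{\mu,0}$. The homogeneity identity $U^H_{\mu,0}(x) = \mu^{-(n-p)/p}\,U^H_{1,0}(x/\mu)$, combined with the change of variables $y = x/\mu$, then turns every $B_r(0)$-integral into the corresponding integral of $U^H_{1,0}$ (respectively of $H(\nabla U^H_{1,0})$) on the Euclidean ball $B_{r/\mu}(0)$.

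For \eqref{eq:Stima_H_nabla_eta} and \eqref{eq:Stima_eta_p_star} this expansion immediately recovers the full-space norms $\|H(\nabla U^H_{1,0})\|_{L^p(\R^n)}^p$ and $\|U^H_{1,0}\|_{L^{p^\star}(\R^n)}^{p^\star}$ (both equal to $\mathcal{S}_H^{n/p}$ by \eqref{eq:PhoR^N}) upon completing $B_{r/\mu}(0)$ to $\R^n$; it remains to verify that the missing tails are $O(\varepsilon^{(n-p)/p})$ and $O(\varepsilon^{n/p})$ respectively, which is exactly what is needed to cancel the $\varepsilon$-prefactors and leave an $O(1)$ remainder. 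Using \eqref{eq:HGradU10} and the equivalence of $H_0$ with the Euclidean norm (which $H_0$ inherits from $H$ via Remark \ref{rem:propertiesH0} and Lemma \ref{lem:propHeasy}), one has the decays $H(\nabla U^H_{1,0})^p\lesssim H_0(-y)^{-p(n-1)/(p-1)}$ and $(U^H_{1,0})^{p^\star}\lesssim H_0(-y)^{-np/(p-1)}$ at infinity; a routine radial integration yields tail sizes of order $\mu^{(n-p)/(p-1)}$ and $\mu^{n/(p-1)}$, which through $\mu = \varepsilon^{(p-1)/p}$ match the required orders exactly.

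The estimate \eqref{eq:Stima_eta_p} follows the same pattern: the same reduction gives
$$\int_{B_r(0)}\eta_\varepsilon^p\,dx \;=\; \frac{\mu^p}{c_{n,p}^{n-p}\,\varepsilon^{(n-p)/p}}\int_{B_{r/\mu}(0)}(U^H_{1,0})^p\,dy,$$
and the integrand $(U^H_{1,0})^p\sim c_{n,p}^{n-p}\,H_0(-y)^{-p(n-p)/(p-1)}$ at infinity is globally integrable exactly when $n>p^2$; in that range the analogous tail estimate produces the main term plus an $O(1)$ error. At the borderline $n = p^2$ one has $p(n-p)/(p-1) = n$, and an anisotropic polar identity of the form $\int g(H_0(-y))\,dy = \omega\int_0^{\infty}g(s)\,s^{n-1}\,ds$ with $\omega := \int_{S^{n-1}}H_0(-\xi)^{-n}\,d\sigma(\xi)$ yields the logarithmic divergence; the algebraic identity $(n-p)/p = p-1$ then collapses the $\varepsilon$-prefactor and produces the announced $\tfrac{\omega(p-1)}{p}|\log\varepsilon|$. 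For \eqref{eq:Stima_eta_q} one only needs a lower bound: discard the outer region, further restrict the $B_{r/\mu}(0)$-integral to the fixed ball $B_1$ (on which $U^H_{1,0}$ is bounded below by a positive constant), and the remaining Jacobian $\mu^n$ together with $\mu = \varepsilon^{(p-1)/p}$ rearranges the $\varepsilon$-exponent exactly to $-(q(n-p)-n(p-1))/p$. The main obstacle in the whole argument is the borderline case $n = p^2$: identifying the correct logarithmic coefficient requires the anisotropic polar decomposition for the Wulff geometry, whereas all the remaining cases reduce to $\varepsilon$-power bookkeeping after the scaling $\mu = \varepsilon^{(p-1)/p}$.
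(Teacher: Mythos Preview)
Your proof is correct and follows the same underlying strategy as the paper (rescale by $\mu=\varepsilon^{(p-1)/p}$, separate main term from tails, and handle the borderline $n=p^2$ via an anisotropic polar decomposition), but your organization is somewhat cleaner in two respects. First, by observing directly that $\eta_\varepsilon|_{B_r}$ is an explicit scalar multiple of the rescaled bubble $U^H_{\mu,0}$, you bypass the paper's Lagrange/mean-value expansion of $H(\nabla\eta_\varepsilon)^p$: on $B_r$ you get $H(\nabla\eta_\varepsilon)^p$ exactly as a constant times $H(\nabla U^H_{\mu,0})^p$, and on $\Omega\setminus B_r$ everything is uniformly bounded since $\hat H_0$ is bounded below there. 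Second, for the logarithmic case you use the Euclidean polar identity $\int g(\hat H_0(y))\,dy=\omega\int_0^\infty g(s)s^{n-1}\,ds$ with $\omega=\int_{S^{n-1}}\hat H_0(\xi)^{-n}\,d\sigma(\xi)$, whereas the paper uses Federer's Coarea formula on level sets $\{\hat H_0=\rho\}$ and defines $\omega=\int_{\{\hat H_0=1\}}|\nabla\hat H_0|^{-1}\,d\mathcal{H}^{n-1}$; the two constants coincide (both equal $n\,|B_1^{\hat H_0}|$), so the final coefficient $\tfrac{\omega(p-1)}{p}$ is the same.

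One small point to tighten: your polar identity is stated for integrals over all of $\R^n$, but you actually need it on the Euclidean ball $B_{r/\mu}(0)$, where the anisotropic radius $s=\hat H_0(y)$ does not have a single upper limit. The fix is exactly what the paper does implicitly: replace $B_{r/\mu}(0)$ by a Wulff ball $\{\hat H_0<R/\mu\}$ at the cost of an $O(1)$ error (using the equivalence of $\hat H_0$ with $|\cdot|$), and then the polar formula applies cleanly with upper limit $R/\mu$. Similarly, in extracting the $|\log\varepsilon|$ you should note, as the paper does in its claim \eqref{eq:claimAnalisI}, that replacing $(1+s^{p/(p-1)})^{-(n-p)}$ by its leading asymptotic $s^{-n}$ contributes only $O(1)$ after integration.
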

	\begin{proof}
		We start computing $\nabla \eta_{\varepsilon}$, getting
		\begin{equation*}
			\begin{aligned}
				\nabla \eta_{\varepsilon}(x) &= \dfrac{\nabla \varphi(x)}{\left(\varepsilon + H_0(-x)^{p/p-1} \right)^{(n-p)/p}} \\
				&\qquad+ \varphi(x) \left(\dfrac{n-p}{p-1}\right)\left(\varepsilon + H_0(-x)^{p/p-1} \right)^{-n/p}H_0(-x)^{1/(p-1)}\nabla H_0(-x).
			\end{aligned}
		\end{equation*}
		By Lagrange Theorem, for $t\in (0,1)$ and setting
		\begin{equation}\label{eq:DefXi}
			\xi = \xi_{x,t}:=  \dfrac{\varphi(x)\left(\dfrac{n-p}{p-1}\right) H_{0}(-x)^{1/(p-1)}\nabla H_{0}(-x)}{\left(\varepsilon + H_0(-x)^{p/p-1} \right)^{n/p}} +  \dfrac{t \,\nabla \varphi(x)}{\left(\varepsilon + H_0(-x)^{p/p-1} \right)^{(n-p)/p}}, 
		\end{equation} 
		we get
		
		\begin{equation}
			\begin{aligned}
				H(\nabla \eta_{\varepsilon})^p&= H \left( \dfrac{\varphi(x)\left(\dfrac{n-p}{p-1}\right) H_{0}(-x)^{1/(p-1)}\nabla H_{0}(-x)}{\left(\varepsilon + H_0(-x)^{p/p-1} \right)^{n/p}}\right)^{p}\\
				&\qquad \qquad + p H(\xi_{x,t})^{p-1} \left\langle \nabla H(\xi), \dfrac{\nabla \varphi(x)}{\left(\varepsilon + H_0(-x)^{p/p-1} \right)^{(n-p)/p}}\right\rangle\\
				&= \varphi(x)^p \left(\dfrac{n-p}{p-1}\right)^p  \dfrac{H_{0}(-x)^{p/p-1}}{\left(\varepsilon + H_0(-x)^{p/p-1} \right)^{n}} \\
				&\qquad \qquad + p H(\xi_{x,t})^{p-1} \left\langle \nabla H(\xi), \dfrac{\nabla \varphi(x)}{\left(\varepsilon + H_0(-x)^{p/p-1} \right)^{(n-p)/p}}\right\rangle%\\
				%&= \varphi(x)^p H(\nabla U_{1,0})^{p}\dfrac{1}{c_{n,p}^{n-p}} + p H(\xi)^{p-1} \left\langle \nabla H(\xi), \dfrac{\nabla \varphi(x)}{\left(\varepsilon + H_0(-x)^{p/p-1} \right)^{(n-p)/p}}\right\rangle,
			\end{aligned}
		\end{equation}
		\noindent where we used \eqref{eq:HGradH0}.
		Now, recalling the properties of $\varphi$, we find
		\begin{equation} \label{eq:Anegliappunti}
			\begin{aligned}
				\int_{\Omega}H(\nabla \eta_{\varepsilon})^p \, dx &= \left(\dfrac{n-p}{p-1}\right)^p \int_{\Omega}\dfrac{H_0(-x)^{p/(p-1)}}{\left(\varepsilon + H_0(-x)^{p/(p-1)}\right)^n}\, dx \\
				&\quad - \left(\dfrac{n-p}{p-1}\right)^p \int_{\Omega}\dfrac{(1-\varphi(x)^p)H_0(-x)^{p/(p-1)}}{\left(\varepsilon + H_0(-x)^{p/(p-1)}\right)^n}\, dx \\
				&\quad + p \int_ {\Omega}\dfrac{H(\xi_{x,t})^{p-1}}{\left(\varepsilon + H_0(-x)^{p/(p-1)}\right)^{n-p}}\langle \nabla H(\xi_{x,t}), \nabla \varphi(x)\rangle\, dx\\
				&=: I - II + III. 
			\end{aligned} 
		\end{equation}
		We start estimating $I$.
		\begin{equation*}
			\begin{aligned}
				I&= \left(\dfrac{n-p}{p-1}\right)^p \int_{\mathbb{R}^n}\dfrac{H_0(-x)^{p/(p-1)}}{\left(\varepsilon + H_0(-x)^{p/(p-1)}\right)^n}\, dx \\
				&\quad - \left(\dfrac{n-p}{p-1}\right)^p \int_{\mathbb{R}^{n}\setminus \Omega}\dfrac{H_0(-x)^{p/(p-1)}}{\left(\varepsilon + H_0(-x)^{p/(p-1)}\right)^n}\, dx =: I' - I''.
			\end{aligned}
		\end{equation*}
		Now, we proceed with $I'$: after the change of variable $x=\varepsilon^{(p-1)/p}y$, 
		and recalling \eqref{eq:HGradU10}, we find
		
		\begin{equation*}
			\begin{aligned}
				I'&=  \left(\dfrac{n-p}{p-1}\right)^p \varepsilon^{(p-n)/p} \int_{\mathbb{R}^n}\dfrac{H_0(-y)^{p/(p-1)}}{\left(1 + H_0(-y)^{p/(p-1)}\right)^n}\\
				&=\dfrac{1}{c_{n,p}^{n-p} \varepsilon^{(n-p)/p}} \|H(\nabla U_{1,0})\|^p_{L^{p}(\mathbb{R}^n)}.
			\end{aligned}
		\end{equation*}
		
		Passing to $I''$, keeping in mind that $0\in \Omega$, $n>p$ and that $H_0(\cdot) \sim \|\cdot\|$
		(see Remark \ref{rem:propertiesH0}), we find
		
		\begin{equation}\label{eq:StimaI''}
			|I''| \leq c\int_{\mathbb{R}^n \setminus \Omega}\dfrac{|y|^{p/(p-1)}}{|y|^{np/(p-1)}}\, dy < +\infty.
		\end{equation}
		Therefore, 
		\begin{equation} \label{eq:StimaIdausare}
			I = \dfrac{1}{c_{n,p}^{n-p} \varepsilon^{(n-p)/p}} \|H(\nabla U^{H}_{1,0})\|^p_{L^{p}(\mathbb{R}^n)} + O(1), \quad \textrm{as } \varepsilon \to 0^{+}.
		\end{equation}
		
		Regarding $II$, since $\varphi =1$ on $B_{r}(0)\subset\subset \Omega$, we easily recognize that
		\begin{equation}\label{eq:stimaIIdausare}
		 II = \left(\dfrac{n-p}{p-1}\right)^p \int_{\Omega \setminus B_{r}(0)}\dfrac{(1-\varphi(x)^p)H_0(-x)^{p/(p-1)}}{\left(\varepsilon + H_0(-x)^{p/(p-1)}\right)^n}\, dx = O(1),
		 \end{equation}
		\noindent since we can argue as in \eqref{eq:StimaI''}.
		
		We are now left with $III$. As for $II$, it reduces to an integral over $\Omega \setminus B_{r}(0)$. Now, recalling the definition of $\xi_{x,t}$ in \eqref{eq:DefXi}, we have that there exists a positive constant $C>0$ (depending on $n, p, r, H_0$ and $\Omega$) such that
		\begin{equation*}
			\text{$|\xi_{x,t}|\leq C$ out of $B_r(0)$},
		\end{equation*}
		
		\noindent and hence $H(\xi_{x,t})^{p-1}\leq C.$ Therefore,
		\begin{equation}\label{eq:stimaIIIdausare}
			|III| \leq C \int_{\Omega \setminus B_{r}(0)}\dfrac{1}{|x|^{p(n-p)/(p-1)}}\, dx <+\infty.
		\end{equation}
		Gathering 
		\eqref{eq:StimaIdausare}, \eqref{eq:stimaIIdausare} and 
		\eqref{eq:stimaIIIdausare}, from \eqref{eq:Anegliappunti} we
		obtain \eqref{eq:etaepsilon}.
		\medskip
		
		Arguing in a similar fashion, we have
		
		\begin{equation}
			\begin{aligned}
				\int_{\Omega}\eta_{\varepsilon}^{p^{\star}}\, dx &= \int_{\Omega}\dfrac{1}{\left(\varepsilon + H_0(-x)^{p/(p-1)}\right)^{n}}\, dx - \int_{\Omega}\dfrac{1-\varphi(x)^{p^{\star}}}{\left(\varepsilon + H_0(-x)^{p/(p-1)}\right)^{n}}\, dx\\
				&=: IV + V.
			\end{aligned} 
		\end{equation}
		As before, 
		\begin{equation}
			\begin{aligned} 
				IV &=\int_{\mathbb{R}^n}\dfrac{1}{\left(\varepsilon + H_0(-x)^{p/(p-1)}\right)^{n}}\, dx - \int_{\mathbb{R}^n \setminus \Omega}\dfrac{1}{\left(\varepsilon + H_0(-x)^{p/(p-1)}\right)^{n}}\, dx\\
				&= IV' - IV''.
			\end{aligned}
		\end{equation}  
		Using the change of variable $x = \varepsilon^{(p-1)/p}y$, we get that
		$$IV' = \dfrac{1}{c_{n,p}^n
			\varepsilon^{n/p}} \|U^{H}_{1,0}\|^{p^{\star}}_{L^{p^{\star}}(\mathbb{R}^n)},$$
		\noindent while, similarly as before,
		$$|IV''|, |V| \leq C < +\infty,$$
		\noindent yielding \eqref{eq:Stima_eta_p_star}.
		
		We are left to estimate the $L^p$-norm of $\eta_\varepsilon$. To this end, we distinguish two cases.
		\vspace*{0.1cm}

		a)\,\,$n>p^2$. In this case, we have
		\begin{equation}
			\begin{aligned}
				\int_{\Omega}\eta_{\varepsilon}^p \, dx &= \int_{\Omega} \dfrac{1}{\left(\varepsilon + H_0(-x)^{p/(p-1)}\right)^{n-p}}\, dx -  \int_{\Omega} \dfrac{1-\varphi(x)^p}{\left(\varepsilon + H_0(-x)^{p/(p-1)}\right)^{n-p}}\, dx\\
				&= \int_{\mathbb{R}^n}\dfrac{1}{\left(\varepsilon + H_0(-x)^{p/(p-1)}\right)^{n-p}}\, dx - \int_{\mathbb{R}^n \setminus \Omega}\dfrac{1}{\left(\varepsilon + H_0(-x)^{p/(p-1)}\right)^{n-p}}\, dx \\
				&\quad -  \int_{\Omega \setminus B_{r}(0)} \dfrac{1-\varphi(x)^p}{\left(\varepsilon + H_0(-x)^{p/(p-1)}\right)^{n-p}}\, dx\\
				&=:VI - VII - VIII.
			\end{aligned}
		\end{equation}
		Now, by changing variable $x = \varepsilon^{(p-1)/p}y$, we get
		\begin{equation*}
			VI = \dfrac{1}{c_{n,p}^{n-p} \varepsilon^{(n-p^2)/p}}\|U^{H}_{1,0}\|^p_{L^{p}(\mathbb{R}^n)}.
		\end{equation*}
		For the remaining ones, we argue similarly as before: on the one hand, we have
		\begin{equation*}
			|VIII| \leq C \int_{\Omega \setminus B_{r}(0)}\dfrac{1}{|x|^{(pn-p^2)/(p-1)}}\, dx < +\infty;
		\end{equation*}
		\noindent on the other hand, since we are assuming $n > p^2$, we obtain
		\begin{equation*}
			|VII| \leq C \int_{\mathbb{R}^n \setminus \Omega}\dfrac{1}{|x|^{(pn-p^2)/(p-1)}}\, dx < +\infty.
		\end{equation*}
		All in all, if $n > p^2$,
		\begin{equation}
			\int_{\Omega}\eta_{\varepsilon}^p \, dx = \dfrac{1}{c_{n,p}^{n-p} \varepsilon^{(n-p^2)/p}}\|U^{H}_{1,0}\|^p_{L^{p}(\mathbb{R}^n)} + O(1).
		\end{equation}
		
		b)\,\,$n = p^2$. In this case, by arguing \emph{exactly} as before, we have
		\begin{align*}
			\int_{\Omega}\eta_{\varepsilon}^p \, dx &= 
			\int_{\Omega} \dfrac{1}{\left(\varepsilon + H_0(-x)^{p/(p-1)}\right)^{p^2-p}}\, dx -  \int_{\Omega} \dfrac{1-\varphi(x)^p}{\left(\varepsilon + H_0(-x)^{p/(p-1)}\right)^{p^2-p}}\, dx\\
			& = \int_{\Omega} \dfrac{1}{\left(\varepsilon + \hat{H}_0(x)^{p/(p-1)}\right)^{p^2-p}}\, dx+O(1).
		\end{align*}
		However, in order to estimate the remaining integral we cannot proceed as in the previous case, since
		the function $1/|x|^{(pn-p^2)/(p-1)} = 1/|x|^n$ \emph{is not integrable at infinity}. 
		To overcome this issue, we adapt to our anisotropic context the approach in \cite{BN}.
		\vspace*{0.05cm}
		
		First of all we observe that, since $0\in\Omega$ and $\hat{H}_0$ is globally equivalent to
		the Euclidean norm (see Remark \ref{rem:propertiesH0}), there exist $R_1,R_2 > 0$ such that
		\begin{align*}
			& \int_{\{\hat{H}_0<R_1\}} \dfrac{1}{\left(\varepsilon + \hat{H}_0(x)^{p/(p-1)}\right)^{p^2-p}}\, dx
			\leq\int_{\Omega} \dfrac{1}{\left(\varepsilon + \hat{H}_0(x)^{p/(p-1)}\right)^{p^2-p}}\, dx \\
			& \qquad \qquad \leq \int_{\{\hat{H}_0<R_2\}} 
			\dfrac{1}{\left(\varepsilon + \hat{H}_0(x)^{p/(p-1)}\right)^{p^2-p}}\, dx;
		\end{align*}
		as a consequence, since we have
		\begin{align*}
			& 0\leq \int_{\{\hat{H}_0<R_2\}} \dfrac{1}{\left(\varepsilon + \hat{H}_0(x)^{p/(p-1)}\right)^{p^2-p}}\, dx
			- 
			\int_{\Omega} \dfrac{1}{\left(\varepsilon + \hat{H}_0(x)^{p/(p-1)}\right)^{p^2-p}}\, dx \\
			& \qquad
			\leq
			\int_{\{\hat{H}_0<R_2\}} \dfrac{1}{\left(\varepsilon + \hat{H}_0(x)^{p/(p-1)}\right)^{p^2-p}}\, dx
			-\int_{\{\hat{H}_0<R_1\}} \dfrac{1}{\left(\varepsilon + \hat{H}_0(x)^{p/(p-1)}\right)^{p^2-p}}\, dx \\
			&\qquad
			= \int_{\{R_1\leq \hat{H}_0<R_2\}} \dfrac{1}{\left(\varepsilon + \hat{H}_0(x)^{p/(p-1)}\right)^{p^2-p}}\, dx\\
			& \qquad\leq \int_{\{R_1\leq \hat{H}_0<R_2\}} 
			\dfrac{1}{\hat{H}_0(x)^{p^2}}\, dx = \mathbf{c}(R_1,R_2) < \infty,
		\end{align*}
		we obtain the following identity
		\begin{equation} \label{eq:dovefareCoarea}
			\int_{\Omega} \dfrac{1}{\left(\varepsilon + \hat{H}_0(x)^{p/(p-1)}\right)^{p^2-p}}\, dx
			= \int_{\{\hat{H}_0<R_2\}} \dfrac{1}{\left(\varepsilon + \hat{H}_0(x)^{p/(p-1)}\right)^{p^2-p}}\, dx
			+O(1).
		\end{equation}
		Now, recalling that $\hat{H}_0$ is globally Lipschitz-continuous in $\R^n$
		(see Remark \ref{rem:propertiesH0}), and taking into account
		\eqref{eq:nablaHBd}, we are entitled to apply Federer's Coarea Formula
		to the integral in the right-hand side of \eqref{eq:dovefareCoarea}
		(see, e.g., \cite{EvGar}): this gives
		\begin{align*}
			& \int_{\{\hat{H}_0<R_2\}} \dfrac{1}{\left(\varepsilon + \hat{H}_0(x)^{p/(p-1)}\right)^{p^2-p}}\, dx
			\\
			& \qquad = \int_0^{R_2}\frac{1}{\left(\varepsilon + \rho^{p/(p-1)}\right)^{p^2-p}}
			\Big(\int_{\{\hat{H}_0 = \rho\}}\frac{1}{|\nabla \hat{H}_0|}\,d\mathcal{H}^{p^2-1}\Big)d\rho \\
			&
			\qquad (\text{using the homogeneity of $\hat{H}_0$ and of $|\nabla \hat{H}_0|$}) \\
			& \qquad
			= \omega\,\int_0^{R_2}\frac{\rho^{p^2-1}}{\left(\varepsilon + \rho^{p/(p-1)}\right)^{p^2-p}},
		\end{align*}
		where we have used the fact that $n = p^2$, and
		$$\omega = \int_{\{\hat{H}_0 = 1\}}\frac{1}{|\nabla \hat{H}_0|}\,d\mathcal{H}^{p^2-1}.$$
		To proceed further, we then claim that
		\begin{equation} \label{eq:claimAnalisI}
			\int_0^{R_2}\frac{\rho^{p^2-1}}{\left(\varepsilon + \rho^{p/(p-1)}\right)^{p^2-p}}
			=
			\int_0^{R_2}\frac{\rho^{p^2-1}}{\varepsilon^{p^2-p} + \rho^{p^2}}\,d\rho+O(1).
		\end{equation}
		Taking this claim for granted for a moment, we can
		complete the estimate of $\|\eta_\e\|^p_{L^p(\R^n)}$ in this case: in fact, by combining
		\eqref{eq:dovefareCoarea} with \eqref{eq:claimAnalisI}, we obtain
		\begin{align*}
			& \int_{\Omega} \dfrac{1}{\left(\varepsilon + \hat{H}_0(x)^{p/(p-1)}\right)^{p^2-p}}\, dx
			= \omega\,\int_0^{R_2}\frac{\rho^{p^2-1}}{\varepsilon^{p^2-p} + \rho^{p^2}}\,d\rho+O(1) \\
			& \qquad = \frac{\omega}{p^2}\,\big(\log(\varepsilon^{p^2-p} + R_2^{p^2})-\log(\e^{p^2-p})\big)
			+ O(1)
			\\
			& \qquad = \frac{\omega(p-1)}{p}|\log(\e)|+O(1)\qquad\text{as $\e\to 0^+$}.
		\end{align*}
		Hence, we are left to prove \eqref{eq:claimAnalisI}. To this end it suffices to observe that,
		by performing the change of variables $\rho = s^{1-1/p}$, we have the following identity
		\begin{align*}
			& \int_0^{R_2}{\rho^{p^2-1}}\left|\frac{1}{\left(\varepsilon + \rho^{p/(p-1)}\right)^{p^2-p}}
			-\frac{1}{\varepsilon^{p^2-p} + \rho^{p^2}}\right|d\rho \\
			& \qquad = \int_0^{R_2\e^{1/p-1}}
			{s^{p^2-1}\left|\frac{1}{\left(1 + s^{p/(p-1)}\right)^{p^2-p}}
				-\frac{1}{1 + s^{p^2}}\right|}ds \\
			& \qquad \leq
			\int_0^{\infty}
			\underbrace{s^{p^2-1}\left|\frac{1}{\left(1 + s^{p/(p-1)}\right)^{p^2-p}}
				-\frac{1}{1 + s^{p^2}}\right|}_{= f(s)}ds.
		\end{align*}
		Since $f\in C([0,\infty))$, we clearly have $f\in L^1([0,1])$; moreover,
		\begin{align*}
			\int_1^\infty f(s)\,ds & = \int_1^\infty
			\frac{1}{s}\left|\frac{1}{s^{p^2}}-\frac{p^2-p}{s^{p/(p-1)}}+o\Big(\frac{1}{s^{p^2}}\Big)
			+ o\Big(\frac{1}{s^{p/(p-1)}}\Big)\right|ds \\
			& \leq c\int_1^\infty\Big(\frac{1}{s^{p^2+1}}+\frac{1}{s^{p/(p-1)+1}}\Big)ds < \infty,
		\end{align*}
		and this completes the proof of \eqref{eq:claimAnalisI}.
		
		Finally, we need to prove \eqref{eq:Stima_eta_q}. Since $0 \in \Omega$, let $\tau > 0$ be such that 
		$$\mathcal{O} = B_\tau^{\hat H_0} := \{\hat{H}_0<\tau\} \subset \Omega$$
		(hence, we have $\varphi\equiv 1$ on $\mathcal{O}$), and let 
		$\mu_\e = \e^{1-1/p}$. We then get
		\begin{equation} \label{eq:etalower0}
			\begin{split}
				\int_{\mathbb{R}^n} \eta_{\varepsilon}^{q}\, dx &\geq 
				\int_{\{\hat{H}_0<\tau\}}\frac{1}{(\e+\hat{H}_0(x)^{\frac{p}{p-1}})^{\frac{q(n-p)}{p}}}\,dx \\
				& (\text{setting $x = \e^{\frac{p-1}{p}} z$, and using the homogeneity of $\hat{H}_0$}) \\
				& = \varepsilon^{-\frac{q(n-p)}{p}+\frac{n(p-1)}{p}}
				\int_{\{\hat{H}_0<\tau/\mu_\e\}}\frac{1}{(1+\hat{H}_0(z)^{\frac{p}{p-1}})^{\frac{q(n-p)}{p}}}\,dx \\
				& (\text{by the Coarea Formula}) \\
				& = \,\varepsilon^{-\frac{q(n-p)}{p}+\frac{n(p-1)}{p}}
				\int_0^{\tau/\mu_\e}\frac{\rho^{n-1}}{(1+\rho^{\frac{p}{p-1}})^{\frac{q(n-p)}{p}}}\,d\rho
				\\
				& \geq c\,\varepsilon^{-\frac{q(n-p)}{p}+\frac{n(p-1)}{p}}.
			\end{split}
		\end{equation}
		This ends the proof.
	\end{proof}

\noindent \textbf{iii)\,\,The first eigenvalue of $-\Delta_p^H$}.
Due to its importance in what follows (and to make
the paper as self-contained as possible), 
we explicitly recall here the definition and
the properties of the first variational Dirichlet eigenvalue $\lambda_1^H(\Omega)$ of $-\Delta_{p}^{H}$.
\vspace*{0.1cm}

Taking into account the variational structure of $-\Delta_p^H$, we define
\begin{equation}\label{eq:firstEigenvalue}
\lambda_1^H(\Omega):=\inf_{\substack{v\in W^{1,p}_0(\Omega)\, , \\ v\neq 0}} \dfrac{\int_\Omega H(\nabla v)^p\, dx}{\int_\Omega |v|^p\, dx}\,.
\end{equation}
Then, from \cite[Theorem 3.1]{BFK} we know that the following properties holds.
\begin{itemize}
  \item[1)] $\lambda_1^H(\Omega) > 0$.
  \vspace*{0.1cm}
  
  \item[2)] There exists a \emph{unique, positive function} $u_1\in W^{1,p}_0(\Omega)$
  such that
  $$\|u_1\|_{L^p(\Omega)} = 1\quad
  \text{and}\quad \lambda_1^H(\Omega) = \int_\Omega H(\nabla u_1)^p\, dx;$$
  this function $u_1$ is a weak solution of
\begin{equation}\label{EF}
\begin{cases}
-\Delta^H_ p u=\lambda_1^H(\Omega) u^{p-1}  & \textrm{in   } \Omega,\\
u=0 & \textrm{on } \partial \Omega.
\end{cases}
\end{equation}
and it is called the \emph{principal eigenfunction of $-\Delta^H_p$} (in $\Omega)$.
\vspace*{0.1cm}

\item[3)] $\lambda_1^H(\Omega)$ is \emph{simple} (i.e.,
the vector space of the solutions of \eqref{EF} is one-dimensional).
 \end{itemize}
\medskip

Throughout the sequel, in order to use a compact notation we set
$$\|u\|_{H,p}:=\left(\int_\Omega H(\nabla u)^p \, dx\right)^\frac{1}{p}\qquad (u\in W^{1,p}_0(\Omega).$$
Notice that, since $H$ is globally equivalent to the Eulidean norm
(see Lemma \ref{lem:propHeasy}), we see that $\|\cdot\|_{H,p}$ is a norm on
$W^{1,p}(\Omega)$ 
which is (globally) equivalent to the usual one.

\noindent\textbf{iv)\,\,The functional $\mathcal{J}_{q,\lambda}$}. 
Taking into account all the notions and results recalled so far,
we conclude this section
by
briefly
studying the functional $\mathcal{J}_{q,\lambda}$
defined in \eqref{eq:defFunctionalJ} (which
`encodes' the variational structure of problem \eqref{eq:Problem}).
\medskip

To begin with, we prove that
$\mathcal{J}_{q,\lambda}$ has the mountain-pass geometry.

\begin{lem}[Mountain-pass geometry for $\mathcal{J}_{q,\lambda}$] \label{lem:MPGeometry}
Let $\Omega\subset\R^n$
be a bounded open set with smooth boundary,
and let $1<p<n$. Moreover, let $p\leq q<p^\star$ and
let $\lambda > 0$. 

We assume that either
\begin{equation} \label{eq:assumptions}
 \mathrm{i)}\,\,\text{$q = p$ and $0<\lambda<\lambda_1^H(\Omega)$}\qquad\text{or}\qquad
 \mathrm{ii)}\,\,\textbf{$q > p$}.
\end{equation}
Then, there exist $\rho,\delta_0 > 0$ such that
\begin{equation} \label{eq:MPGeometry}
 \mathcal{J}_{q,\lambda}(u)\geq \delta_0 > 0\quad
 \text{for all $u\in W^{1,p}_0(\Omega)$ with $\|u\|_{H,p} = \rho$}.
\end{equation}
\end{lem}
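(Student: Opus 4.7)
The plan is to estimate $\mathcal{J}_{q,\lambda}(u)$ from below by a polynomial in $\rho = \|u\|_{H,p}$ in which the leading $\rho^p$-term has a positive coefficient, the corrective terms involve powers strictly larger than $p$, and then to choose $\rho$ sufficiently small so that the dominant term wins.

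First I would rewrite, for any $u\in W^{1,p}_0(\Omega)$,
\begin{equation*}
 \mathcal{J}_{q,\lambda}(u)=\tfrac{1}{p}\|u\|_{H,p}^p-\tfrac{\lambda}{q}\|u^+\|_{L^q(\Omega)}^q-\tfrac{1}{p^\star}\|u^+\|_{L^{p^\star}(\Omega)}^{p^\star}.
\end{equation*}
The critical term is handled uniformly in both cases by the anisotropic Sobolev inequality \eqref{eq:AnisotropicSobolev}, which yields
$\|u^+\|_{L^{p^\star}(\Omega)}^{p^\star}\leq \mathcal{S}_H^{-p^\star/p}\,\|u\|_{H,p}^{p^\star}$.

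In case \textup{i)} (so $q=p$ and $0<\lambda<\lambda_1^H(\Omega)$), the variational definition \eqref{eq:firstEigenvalue} of $\lambda_1^H(\Omega)$ gives the Poincaré-type bound $\|u^+\|_{L^p(\Omega)}^p\leq \|u\|_{L^p(\Omega)}^p\leq \lambda_1^H(\Omega)^{-1}\|u\|_{H,p}^p$, whence
\begin{equation*}
 \mathcal{J}_{p,\lambda}(u)\geq \tfrac{1}{p}\bigl(1-\tfrac{\lambda}{\lambda_1^H(\Omega)}\bigr)\|u\|_{H,p}^p-\tfrac{1}{p^\star}\,\mathcal{S}_H^{-p^\star/p}\,\|u\|_{H,p}^{p^\star}.
\end{equation*}
Since $\lambda<\lambda_1^H(\Omega)$ the coefficient of $\|u\|_{H,p}^p$ is strictly positive and, because $p^\star>p$, choosing $\rho>0$ small enough makes the right-hand side $\geq \delta_0$ for some $\delta_0>0$ whenever $\|u\|_{H,p}=\rho$.

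In case \textup{ii)} (so $p<q<p^\star$), I would invoke instead the continuous Sobolev embedding $W^{1,p}_0(\Omega)\hookrightarrow L^q(\Omega)$ (available because $q<p^\star$ and $\Omega$ is bounded), in the equivalent norm $\|\cdot\|_{H,p}$: this gives a constant $C=C(\Omega,n,p,q,H)>0$ with $\|u^+\|_{L^q(\Omega)}^q\leq C\|u\|_{H,p}^q$. Combining with the critical estimate we get
\begin{equation*}
 \mathcal{J}_{q,\lambda}(u)\geq \tfrac{1}{p}\|u\|_{H,p}^p-\tfrac{\lambda C}{q}\|u\|_{H,p}^q-\tfrac{1}{p^\star}\,\mathcal{S}_H^{-p^\star/p}\,\|u\|_{H,p}^{p^\star},
\end{equation*}
and since both $q>p$ and $p^\star>p$, for $\rho>0$ sufficiently small the first term dominates the other two, giving \eqref{eq:MPGeometry} with a suitable $\delta_0>0$.

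There is essentially no obstacle here: the argument is a direct convex-analysis/Sobolev-bound estimate, and the whole content is to exploit, in case \textup{i)}, the strict inequality $\lambda<\lambda_1^H(\Omega)$ to ensure positivity of the leading coefficient, and in case \textup{ii)}, the strict subcriticality $q<p^\star$ so that $q>p$ automatically dominates $\rho^p$ from below at small $\rho$. The only points worth double-checking are that $\|\cdot\|_{H,p}$ is indeed a norm equivalent to the standard one (which follows from \eqref{eq:HequivalentEu} in Lemma \ref{lem:propHeasy}), so the Poincaré and Sobolev inequalities quoted above are legitimate with this norm on the right-hand side.
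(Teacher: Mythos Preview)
Your proof is correct and follows essentially the same approach as the paper: in case~i) you use the variational characterization of $\lambda_1^H(\Omega)$ together with the anisotropic Sobolev inequality exactly as the authors do, and in case~ii) your direct use of the embedding $W^{1,p}_0(\Omega)\hookrightarrow L^q(\Omega)$ is equivalent to the paper's route via H\"older plus the critical Sobolev inequality. The concluding ``$\rho$ small'' argument is identical.
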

\begin{proof}
 According to \eqref{eq:assumptions}, we distinguish two cases.
 \medskip
 
 \textsc{Case i):} $q = p$ and $0<\lambda<\lambda_1^H(\Omega)$.
 In this case we first notice that, owing to the ve\-ry definition of
 $\lambda_1^H(\Omega)$ given in \eqref{eq:firstEigenvalue}, 
 for every $u\in W^{1,p}_0(\Omega)$ we have
 \begin{equation} \label{eq:estimpLinearCasei}
  \int_{\Omega}(u^+)^p \, dx 
  \leq \int_{\Omega}|u|^p \, dx 
  \leq \frac{1}{\lambda_1^H(\Omega)}\int_{\Omega}H(\nabla u)^p \, dx
  = \frac{1}{\lambda_1^H(\Omega)}\|u\|_{H,p}^p.
 \end{equation}
 Moreover, by the anisotropic Sobolev inequality \eqref{eq:AnisotropicSobolev}, we also have
 \begin{equation} \label{eq:estimCriticalCaseiii}
    \int_{\Omega}(u^+)^{p^\star}\,dx
    \leq  \int_{\Omega}|u|^{p^\star}\,dx
    \leq \mathcal{S}_H^{-p^\star/p}\|u\|_{H,p}^{p^\star}\quad\forall\,\,u\in W^{1,p}_0(\Omega).
 \end{equation}
 Gathering \eqref{eq:estimpLinearCasei}\,-\,\eqref{eq:estimCriticalCaseiii}
 (and since we are assuming $q = p$), we then get
 \begin{equation} \label{eq:estimtoConclude}
 \begin{split}
  \mathcal{J}_{p,\lambda}(u)
  & \geq \frac{1}{p}\|u\|_{H,p}^p
  -\frac{\lambda}{p\lambda_1^H(\Omega)}\|u\|_{H,p}^p-
  \frac{\mathcal{S}_H^{-p^\star/p}}{p^\star}\|u\|_{H,p}^{p^\star} \\
  & = \|u\|_{H,p}^p\bigg\{\frac{1}{p}\Big(1-\frac{\lambda}{\lambda_1^H(\Omega)}\Big)-
  \frac{\mathcal{S}_H^{-p^\star/p}}{p^\star}\|u\|_{H,p}^{p^\star-p}\bigg\}\quad\forall\,\,u\in W^{1,p}_0(\Omega).
  \end{split}
 \end{equation}
 With estimate \eqref{eq:estimtoConclude} at hand,
 we can easily complete the proof
 of \eqref{eq:MPGeometry} in this case.
 Indeed, since we are assuming $0<\lambda<\lambda_1^H(\Omega)$, we clearly have
 $$\theta = \frac{1}{p}\Big(1-\frac{\lambda}{\lambda_1^H(\Omega)}\Big) > 0;$$
 as a consequence, since $p^\star-p > 0$, we can choose $\rho > 0$ so small that
 $$\frac{1}{p}\Big(1-\frac{\lambda}{\lambda_1^H(\Omega)}\Big)-\frac{\mathcal{S}_H^{-p^\star/p}}{p^\star}\rho^{p^\star-p}  \geq   \frac{\theta}{2}.$$
 This, together with \eqref{eq:estimtoConclude}, thus ensures that 
 \begin{align*}
  \mathcal{J}_{p,\lambda}(u)
  & \geq \|u\|_{H,p}^p\bigg\{\frac{1}{p}\Big(1-\frac{\lambda}{\lambda_1^H(\Omega)}\Big)-
  \frac{\mathcal{S}_H^{-p^\star/p}}{p^\star}\|u\|_{H,p}^{p^\star-p}\bigg\} \\
  & = \rho^p\bigg\{\frac{1}{p}\Big(1-\frac{\lambda}{\lambda_1^H(\Omega)}\Big)-
  \frac{\mathcal{S}_H^{-p^\star/p}}{p^\star}\rho^{p^\star-p}\bigg\}
  \geq \frac{\theta\rho^{p}}{2} = \delta_0 > 0,
 \end{align*}
 provided that $u\in W_0^{1,p}(\Omega)$ and $\|u\|_{H,p} = \rho$. 
 \medskip
 
  \textsc{Case ii):} $p<q<p^\star$.
  In this case we first notice that, by combining H\"older's inequality
  with the anisotropic Sobolev inequality \eqref{eq:AnisotropicSobolev},
  for every $u\in W^{1,p}_0(\Omega)$ we have
  \begin{equation} \label{eq:estimqSuperLinCaseii}
  \begin{split}
   \int_{\Omega}(u^+)^q \, dx 
   & \leq \int_{\Omega}|u|^q \, dx 
   \leq |\Omega|^{1-q/p^\star}\|u\|_{L^{p^\star}(\Omega)}^q 
   \leq c\|u\|_{H,p}^q,
   \end{split}
  \end{equation}
  where $c > 0$ is a suitable constant only depending on $\Omega$.
  Thus, by combining \eqref{eq:estimqSuperLinCaseii} 
  with the above
  \eqref{eq:estimCriticalCaseiii} (which is independent of $q$ and $\lambda$),
  we obtain
  \begin{equation} \label{eq:estimtoConcludeCaseii}
 \begin{split}
  \mathcal{J}_{q,\lambda}(u)
  & \geq \frac{1}{p}\|u\|_{H,p}^p
  -\frac{c\lambda}{q}\|u\|_{H,p}^q-
  \frac{\mathcal{S}_H^{-p^\star/p}}{p^\star}\|u\|_{H,p}^{p^\star} \\
  & (\text{since we are assuming $q > p$}) \\
  & = \|u\|_{H,p}^p\bigg\{\frac{1}{p}-\frac{c\lambda}{q}\|u\|_{H,p}^{q-p}-
  \frac{\mathcal{S}_H^{-p^\star/p}}{p^\star}\|u\|_{H,p}^{p^\star-p}\bigg\}\quad\forall\,\,u\in W_0^{1,p}(\Omega).
  \end{split}
 \end{equation}
 With estimate \eqref{eq:estimtoConcludeCaseii} at hand,
 we can easily complete the proof
 of \eqref{eq:MPGeometry} in this case.
 Indeed, since $p^\star-p > 0$ and $q-p > 0$ (as $p < q$), we
 can choose $\rho>0$ so small that
 $$\frac{1}{p}-\frac{c\lambda}{q}\rho^{q-p}-
  \frac{\mathcal{S}_H^{-p^\star/p}}{p^\star}\rho^{p^\star-p}\geq \frac{1}{2p};$$
 this, together with \eqref{eq:estimtoConcludeCaseii}, thus ensures that
 \begin{align*}
  \mathcal{J}_{q,\lambda}(u)
  & \geq \|u\|_{H,p}^p\bigg\{\frac{1}{p}-\frac{c\lambda}{q}\|u\|_{H,p}^{q-p}-
  \frac{\mathcal{S}_H^{-p^\star/p}}{p^\star}\|u\|_{H,p}^{p^\star-p}\bigg\} \\
  & = 	\rho^p\bigg\{\frac{1}{p}-\frac{c\lambda}{q}\rho^{q-p}-
  \frac{\mathcal{S}_H^{-p^\star/p}}{p^\star}\rho^{p^\star-p}\bigg\}
  \geq \frac{\rho^{p}}{2p} = \delta_0 > 0,
 \end{align*}
 provided that $u\in W_0^{1,p}(\Omega)$ and $\|u\|_{H,p} = \rho$.
\end{proof}
\begin{rem} \label{rem:StrongerMPG}
 By scrutinizing the proof of Lemma \ref{lem:MPGeometry}, it is easy
 to recognize that condition \eqref{eq:MPGeometry} actually holds in the following
 \emph{stronger form}: there exist $\rho,\eta > 0$ such that
 \begin{equation} \label{eq:StrongerMPG}
  \mathcal{J}_{q,\lambda}(u)\geq \eta\|u\|_{H,p}^p\quad\text{for every $u\in W^{1,p}_0(\Omega)$
 with $\|u\|_{H,p}\leq \rho$}.
 \end{equation}
 Clearly, condition \eqref{eq:MPGeometry} follows from \eqref{eq:StrongerMPG} by choosing
 $\delta_0 = \eta\rho^p$.
\end{rem}
\begin{rem} \label{rem:Equivalentnorms}
 We explicitly observe that, since the norm
 $\|\cdot\|_{H,p}$ is equivalent to the usual Sobolev norm, we see that
 condition \eqref{eq:StrongerMPG} \emph{(}and hence condition \eqref{eq:MPGeometry}\emph{)} 
 holds if we replace the norm $\|\cdot\|_{H,p}$ with
 $$\|u\|_{W_0^{1,p}(\Omega)} = \Big(\int_\Omega|\nabla u|^p\,dx\Big)^{1/p}$$ 
 (up to possibly modifying $\rho$).
\end{rem}
We then prove the following technical lemma, in the same spirit of \cite{AG} and \cite{GAP}.

\begin{lem}\label{lem:PSseq}
Assume that  there exists a 
\emph{Palais-Smale sequence $\{u_t\}_{t \geq 0} \subset W^{1,p}_0(\Omega)$ for
the functional $\mathcal{J}_{q,\lambda}$ at level $\alpha \in (0, \mathcal{S}_H^{n/p}/n)$}, that is, 
\begin{itemize}
 \item[i)] $\mathcal{J}_{q,\lambda}(u_t)\to\alpha$ as $t\to+\infty$;
 \item[ii)] $\mathcal{J}_{q,\lambda}'(u_t)\to 0$ as $t\to+\infty$ in $(W_0^{1,p}(\Omega))'$.
\end{itemize}
Then there exists $u \in W_0^{1,p}(\Omega) \setminus \{0\}$ such that $u_t \rightharpoonup  u$ up	to a subsequence, and 
$$\text{$J'_{q,\lambda}(u)[v] = 0$ for all $v \in W^{1,p}_0(\Omega)$}.$$
In particular, $u$ is a solution of problem \eqref{eq:Problem}.
\end{lem}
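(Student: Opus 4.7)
The plan is to proceed by the standard strategy for Palais-Smale sequences of critical-exponent functionals, adapted to the anisotropic $p$-Laplacian. First I would show that $\{u_t\}$ is bounded in $W^{1,p}_0(\Omega)$: writing $\e_t\to 0$ for the operator norm of $\mathcal{J}'_{q,\lambda}(u_t)$, one combines $\mathcal{J}_{q,\lambda}(u_t)$ and $\mathcal{J}'_{q,\lambda}(u_t)[u_t]$ so as to eliminate or control the $(u^+)^q$ term. For $q>p$ the combination $\mathcal{J}_{q,\lambda}(u_t)-\tfrac{1}{q}\mathcal{J}'_{q,\lambda}(u_t)[u_t]$ leaves strictly positive coefficients $\tfrac{1}{p}-\tfrac{1}{q}$ and $\tfrac{1}{q}-\tfrac{1}{p^\star}$ in front of $\int_\Omega H(\nabla u_t)^p\,dx$ and $\int_\Omega(u_t^+)^{p^\star}\,dx$; for $q=p$ the combination $\mathcal{J}_{p,\lambda}(u_t)-\tfrac{1}{p^\star}\mathcal{J}'_{p,\lambda}(u_t)[u_t]=\tfrac{1}{n}\bigl(\|u_t\|_{H,p}^p-\lambda\|u_t\|_{L^p(\Omega)}^p\bigr)$ yields boundedness when $\lambda<\lambda_1^H(\Omega)$, which is the regime that governs the applications (via \eqref{eq:firstEigenvalue}). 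Extracting a subsequence, reflexivity and Rellich-Kondrachov produce $u\in W^{1,p}_0(\Omega)$ with $u_t\rightharpoonup u$ weakly, $u_t\to u$ in $L^r(\Omega)$ for every $r\in[1,p^\star)$, and $u_t\to u$ a.e.\ in $\Omega$.

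The crucial step is to upgrade this to pointwise convergence of the gradients, $\nabla u_t\to \nabla u$ a.e., and thence to pass to the limit in $\mathcal{J}'_{q,\lambda}(u_t)[v]\to 0$. Here I would adapt the Boccardo-Murat trick to the anisotropic setting: testing the Palais-Smale relation against $u_t-u$ (which is bounded in $W^{1,p}_0(\Omega)$) and exploiting the strong $L^{(p^\star)'}$-convergence of $(u_t^+)^{p^\star-1}$ through appropriate truncations reduces the problem to showing
\[
\int_\Omega \bigl\langle H(\nabla u_t)^{p-1}\nabla H(\nabla u_t)-H(\nabla u)^{p-1}\nabla H(\nabla u),\,\nabla(u_t-u)\bigr\rangle\,dx \to 0.
\]
The ellipticity estimate \eqref{eq:ellipticity} ensures that the vector field $\xi\mapsto H(\xi)^{p-1}\nabla H(\xi)$ is \emph{strictly monotone}, and this forces $\nabla u_t\to\nabla u$ a.e.\ along a further subsequence. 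A Vitali argument then upgrades this to weak $L^{p'}$-convergence of the nonlinear operator, while the lower-order nonlinearities $(u_t^+)^{q-1}$ and $(u_t^+)^{p^\star-1}$ converge in the corresponding dual spaces (for the critical one, combining a.e.\ convergence with the uniform $L^{p^\star}$ bound and a Brezis-Lieb-type argument); passing to the limit in $\mathcal{J}'_{q,\lambda}(u_t)[v]\to 0$ one then gets $\mathcal{J}'_{q,\lambda}(u)[v]=0$ for every $v\in W^{1,p}_0(\Omega)$.

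It remains to show $u\not\equiv 0$, where the level bound $\alpha<\mathcal{S}_H^{n/p}/n$ enters. Arguing by contradiction, suppose $u\equiv 0$; then $\int_\Omega(u_t^+)^q\,dx\to 0$ by subcritical compactness, and up to further subsequences we may write $\int_\Omega H(\nabla u_t)^p\,dx\to L$ and $\int_\Omega(u_t^+)^{p^\star}\,dx\to M$. Passing to the limit in $\mathcal{J}'_{q,\lambda}(u_t)[u_t]\to 0$ gives $L=M$, in $\mathcal{J}_{q,\lambda}(u_t)\to\alpha$ gives $L/n=\alpha$, and \eqref{eq:AnisotropicSobolev} gives $\mathcal{S}_H M^{p/p^\star}\leq L=M$, so that $L\geq \mathcal{S}_H^{n/p}$ and hence $\alpha\geq \mathcal{S}_H^{n/p}/n$, a contradiction. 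Finally, the weak and strong maximum principles for $-\Delta^H_p$ recalled right after Definition \ref{def:weak_sol} give $u>0$ a.e.\ in $\Omega$, so $u$ is a positive weak solution of \eqref{eq:Problem}. I expect the second step to be the main obstacle: because $p\neq 2$ one cannot pass to the weak limit directly in the fully nonlinear operator $-\Delta^H_p u_t$, and the Boccardo-Murat truncation combined with the strict monotonicity of $\xi\mapsto H(\xi)^{p-1}\nabla H(\xi)$ has to be executed carefully in the anisotropic framework.
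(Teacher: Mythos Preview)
Your outline follows the same overall strategy as the paper: prove boundedness of the Palais--Smale sequence, extract a weak limit $u$, pass to the limit in $\mathcal{J}'_{q,\lambda}(u_t)[v]\to 0$, and rule out $u\equiv 0$ via the anisotropic Sobolev inequality and the level bound $\alpha<\mathcal{S}_H^{n/p}/n$ exactly as you describe. Two places where your treatment is in fact more careful than the paper's own proof: (i) the paper's boundedness argument uses the single combination $q\mathcal{J}_{q,\lambda}(u_t)-\mathcal{J}'_{q,\lambda}(u_t)[u_t]\geq \tfrac{q-p}{p}\|u_t\|_{H,p}^p$, which is vacuous when $q=p$, whereas you correctly supply the separate $q=p$ argument using $\tfrac{1}{p^\star}$ and the eigenvalue condition $\lambda<\lambda_1^H(\Omega)$; (ii) the paper simply asserts $\mathcal{J}'_{q,\lambda}(u_t)[v]\to\mathcal{J}'_{q,\lambda}(u)[v]$ from weak convergence and compact embedding without further comment, while you rightly flag the a.e.\ gradient convergence (via strict monotonicity of $\xi\mapsto H(\xi)^{p-1}\nabla H(\xi)$ and a Boccardo--Murat truncation) as the genuine analytical step required to justify that passage.
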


\begin{proof} 
	We first observe that, since $\{u_t\}_{t\geq 0}$ is a Palais-Smale
	sequence for $\mathcal{J}_{q,\lambda}$ (hence,
	properties i)-ii) holds), we can find a constant $C > 0$ such that
	\begin{align*}
	\mathrm{a)}\,\,&|\mathcal{J}_{q,\lambda}(u_t)| \leq C \\
	\mathrm{b)}\,\,&|\mathcal{J}_{q,\lambda}'(u_t)[u_t]| \leq o(1) \left(\int_\Omega H(\nabla u_t)^p \,dx\right)^\frac{1}{p}= o(1) \cdot \|u_t\|_{H,p}.
	\end{align*}
	We then proceed by steps.
	\medskip
	
	\noindent  \textsc{Step i):}  we firstly prove that $\{u_t\}_{t\geq 0}$  is bounded in $W_0^{1,p}(\Omega)$; hence, we compute 
	\[
	q \mathcal{J}_{q,\lambda}(u_t) - \mathcal{J}_{q,\lambda}'(u_t)[u_t] \leq C + o(1) 
	\cdot \|u_t\|_{H,p},
	\]
	but we also have
	\begin{align*}
		q\mathcal{J}_{q,\lambda}(u_t)- \mathcal{J}'_{q,\lambda}(u_t)[u_t] & = \left(\frac qp-1\right)\int_\Omega H(\nabla u_t)^p\, dx - \lambda \int_\Omega (u^+_t)^q \, dx -\frac{q}{p^\star} \int_\Omega (u_t^+)^{p^\star}\, dx \\
		& \qquad + \lambda \int_\Omega (u^+_t)^q \, dx + \int_\Omega (u_t^+)^{p^\star}\, dx \\
		&\geq\left(\frac{q-p}{p}\right)\int_\Omega H(\nabla u_t)^p\, dx + \left(1-\frac{q}{p^\star}\right) \int_\Omega (u_t^+)^{p^\star}\, dx\\
		&\geq\left(\frac{q-p}{p}\right)\int_\Omega H(\nabla u_t)^p\, dx = \left(\frac{q-p}{p}\right) \|u_t\|_{H,p}^p,
	\end{align*}
	where we used the fact that $1-q/p^\star \geq 0$.  Thus we have that 
	\[
	\|u_t\|_{H,p}^p \leq C + o(1) \|u_t\|_{H,p}.
	\]
	It follows that $\{u_t\}_{t\geq 0}$ is bounded in $W^{1,p}_0(\Omega)$. Hence, there exists $u\in W^{1,p}_0(\Omega)$ such that, up to a subsequence, $u_t\rightharpoonup u$ and therefore $u_t\rightarrow u$ in $L^s(\Omega)$ for all $s<p^\star$. 
	
	In particular, for any $v\in W^{1,p}_0(\Omega)$ we obtain that 
	\begin{align*}
		\mathcal{J}'_{q,\lambda}(u_t)[v]\rightarrow \mathcal{J}'_{q,\lambda}(u)[v]=0.
	\end{align*}
	
	\noindent \textsc{Step ii):} we want to prove that $u \not \equiv 0$. Assume by contradiction that $u\equiv 0$, then, as pointed out above, $u_t\rightharpoonup 0$ in $W^{1,p}_{0}(\Omega)$ implies 
	$$
	u_t \rightarrow 0 \quad \text{in } L^s(\Omega)\, \text{ for all } s<p^\star.
	$$
	Since $\{u_t\}_{t\geq 0}$ is a Palais-Smale 
	sequence (hence, properties i)-ii) hold), 
	from the one hand we have $\mathcal{J}'_{q,\lambda}(u_t)[u_t]=o(1)$;  on the other hand, we also
	have
	\begin{align*}
		\mathcal{J}'_{q,\lambda}(u_t)[u_t]&= \int_\Omega H(\nabla u_t)^p \, dx - \lambda \int_\Omega (u_t^+)^q\, dx - \int_\Omega (u_t^+)^{p^\star} \, dx\\
		&=\int_\Omega H(\nabla u_t)^p \, dx - o(1) - \int_\Omega (u_t^+)^{p^\star} \, dx \,, 
	\end{align*}
	i.e. 
	\begin{equation}\label{AG1}
		\int_\Omega H(\nabla u_t)^p \, dx - \int_\Omega (u_t^+)^{p^\star} \, dx= o(1)\, .
	\end{equation}
	Moreover, from the anisotropic Sobolev inequality we have 
	\begin{equation}\label{AG1bis}
	o(1)\geq \int_\Omega H(\nabla u_t)^p \, dx \left[ 1-\mathcal{S}_H^{-p^\star/p}\left( \int_\Omega H(\nabla u_t)^{p} \, dx\right)^{\frac{p^\star-p}{p}}\right]\, . 
	\end{equation}
	We recall that, since $\{u_t\}_{t\geq 0}$ is bounded in $W^{1,p}_0(\Omega)$, there exists $C>0$ such that
	$$\left(\int_\Omega H(\nabla u_t)^p\, dx \right)^{1/p} \leq C,$$
	for each $t>0$. Hence we can pass to the limit for $t \rightarrow 0^+$, and we deduce that up to subsequences
	\begin{equation}\label{eq:GradLimitL}
		\int_\Omega H(\nabla u_t)^p\, dx \rightarrow L,
	\end{equation}
	where $L \geq 0$. If $L=0$, then by \eqref{AG1} we would have also that 
	$$\int_\Omega (u_t^+)^{p^\star} \, dx \rightarrow 0,$$
	which immediately implies $\mathcal{J}_{q,\lambda}(u_t) \rightarrow 0$ in $\R$, which
	is in contradiction with property i)
	of the sequence $\{u_t\}_{t\geq 0}$.
	Hence,
	\begin{equation}\label{eq:GradLimitnot0}
		\int_\Omega H(\nabla u_t)^p\, dx \not \rightarrow 0,
	\end{equation}
	and thus $L > 0$. This, together with \eqref{AG1bis}, easily implies
	that $L\geq \mathcal{S}_H^{n/p}$, so that
	\begin{equation}\label{AG2}
		\int_\Omega H(\nabla u_t)^p\, dx \geq \mathcal{S}_H^{n/p} + o(1).
	\end{equation}
	
	Finally, from \eqref{AG2} and \eqref{AG1} we have 
	\begin{align*}
		\mathcal{J}_{q,\lambda}(u_t)&= \frac{1}{n} \int_\Omega H(\nabla u_t)^p\, dx + \frac{1}{p^\star}  \left\{ \int_\Omega H(\nabla u_t)^p\, dx - \int_\Omega (u_t^+)^{p^\star}\, dx \right\} + o(1) \\
		&=  \frac{1}{n} \int_\Omega H(\nabla u_t)^p\, dx  + o(1) \\
		&\geq \dfrac{\mathcal{S}_H^{n/p}}{n}+o(1)\, , 
	\end{align*}
	which contradicts the assumption $\alpha< n^{-1}\mathcal{S}_H^{n/p}$.
	This closes the proof.
\end{proof}

\section{Proofs of Theorems \ref{thm:BN-style}, \ref{thm_AG}, \ref{thm:PertNonlinear}}\label{sec:Proof_BN}
Taking into account all
the preliminary results
established in Section \ref{sec:Prel},
we are now ready to prove
all the existence results
stated in the Introduction. 
\medskip

To begin with, we consider the case
of $p$-linear perturbation $q=p$, and we start by dealing with the 
high-dimensional case $1<p^2\leq n$ (hence, we aim at proving
Theorem \ref{thm:BN-style}). 
In order to do this, let us  consider the function, for every fixed $\e > 0$, 
\begin{align*}
	v_\e & = \big(\e^{1/p}c_{n,p}\big)^{\frac{n-p}{p}}\eta_\e = 
	\big(\e^{1/p}c_{n,p}\big)^{\frac{n-p}{p}}\,
	\frac{\varphi(x)}{(\e+\hat{H}_0(x)^{\frac{p}{p-1}})^{\frac{n-p}{p}}}
	= \varphi(x)U^H_{\varepsilon^{p/(p-1)},0}(x)
\end{align*}
where we have set (here and throughout) 
$$\hat{H}_0(x)= H_0(-x)$$ 
and $\eta_{\varepsilon}$ was defined in Lemma \ref{lem:talentiane}.

Now, we would like to construct a path with the right energy, in order to apply the mountain pass theorem. Hence, it holds the following:

\begin{lem}\label{lem:Path_q=p}
	Let $1<p^2\leq n$. Then there exists $\varepsilon>0$ small enough such that 
		\begin{equation*}
			\sup_{t \geq 0}\mathcal{J}_{p,\lambda}(t v_{\varepsilon}) < \dfrac{\mathcal{S}_H^{n/p}}{n}\quad
			\text{for all $\lambda > 0$}.
		\end{equation*}
\end{lem}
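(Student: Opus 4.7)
The plan is as follows. Since $v_\e\geq 0$, we have $(v_\e)^+ = v_\e$, so
$$
\mathcal{J}_{p,\lambda}(tv_\e) = \frac{t^p}{p}(A_\e-\lambda B_\e) - \frac{t^{p^\star}}{p^\star}C_\e\qquad(t\geq 0),
$$
where $A_\e := \int_\Omega H(\nabla v_\e)^p\,dx$, $B_\e := \int_\Omega v_\e^p\,dx$, and $C_\e := \int_\Omega v_\e^{p^\star}\,dx$. Provided that $A_\e - \lambda B_\e > 0$ (which I will verify a posteriori), a one-variable optimization in $t$ shows that the supremum over $t\geq 0$ is attained at a unique positive point and equals
$$
\sup_{t\geq 0}\mathcal{J}_{p,\lambda}(tv_\e) = \frac{1}{n}\biggl(\frac{A_\e - \lambda B_\e}{C_\e^{p/p^\star}}\biggr)^{\!n/p}.
$$
Hence the lemma reduces to proving that, for some small $\e > 0$,
$$
\frac{A_\e - \lambda B_\e}{C_\e^{p/p^\star}} < \mathcal{S}_H.
$$

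Next I would use the scaling $v_\e = (\e^{1/p}c_{n,p})^{(n-p)/p}\eta_\e$ to turn Lemma \ref{lem:talentiane} into asymptotic expansions for $A_\e,B_\e,C_\e$. Combined with \eqref{eq:PhoR^N}, this gives $A_\e = \mathcal{S}_H^{n/p} + O(\e^{(n-p)/p})$ and $C_\e = \mathcal{S}_H^{n/p} + O(\e^{n/p})$. A Taylor expansion of the denominator then yields
$$
\frac{A_\e}{C_\e^{p/p^\star}} = \mathcal{S}_H + O(\e^{(n-p)/p}),
$$
since $\mathcal{S}_H^{n/p-n/p^\star} = \mathcal{S}_H$. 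Moreover, \eqref{eq:Stima_eta_p} in Lemma \ref{lem:talentiane}, after the same scaling, gives $B_\e = c_1\,\e^{p-1} + O(\e^{(n-p)/p})$ when $n > p^2$, and $B_\e = c_2\,\e^{p-1}|\log\e| + O(\e^{p-1})$ when $n = p^2$, where $c_1,c_2 > 0$ are explicit constants depending only on $n,p$ (and, in the borderline case, on $\omega$).

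Putting these expansions together I obtain
$$
\frac{A_\e - \lambda B_\e}{C_\e^{p/p^\star}} = \mathcal{S}_H - \kappa\lambda\,\gamma_\e + O(\e^{(n-p)/p})
$$
for some $\kappa > 0$ depending only on $n,p$, with $\gamma_\e = \e^{p-1}$ if $n > p^2$ and $\gamma_\e = \e^{p-1}|\log\e|$ if $n = p^2$. The decisive step, and the main obstacle, is the comparison of $\gamma_\e$ with the error $\e^{(n-p)/p}$: when $n > p^2$ one has $(n-p)/p > p-1$, so $\gamma_\e$ already beats the error; when $n = p^2$ the two exponents coincide, but the diverging factor $|\log\e|$ still makes $-\kappa\lambda\gamma_\e$ dominate the positive remainder. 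In either case the right-hand side is strictly less than $\mathcal{S}_H$ for every $\lambda > 0$ once $\e$ is small enough, and automatically $A_\e - \lambda B_\e > 0$ (since $B_\e\to 0$ while $A_\e\to \mathcal{S}_H^{n/p}$), which closes the proof. The borderline dimension $n = p^2$ is the only subtlety, and it is precisely the logarithmic blow-up of $\|\eta_\e\|_{L^p(\Omega)}^p$ supplied by Lemma \ref{lem:talentiane} that saves the argument there.
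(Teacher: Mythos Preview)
Your argument is correct and rests on the same ingredients as the paper's proof (the asymptotics of Lemma~\ref{lem:talentiane} together with \eqref{eq:PhoR^N}, and the comparison of the exponent $p-1$ with $(n-p)/p$, including the logarithmic correction when $n=p^2$). The only difference is organizational: you optimize in $t$ explicitly, obtaining the closed form
\[
\sup_{t\geq 0}\mathcal{J}_{p,\lambda}(tv_\e)=\frac{1}{n}\Big(\frac{A_\e-\lambda B_\e}{C_\e^{p/p^\star}}\Big)^{n/p},
\]
and then expand the quotient, whereas the paper argues by contradiction, extracts a maximizer $t_\e$, checks that $t_\e$ stays bounded and bounded away from $0$, and bounds the first two terms via the elementary inequality $s^p-1-\frac{n-p}{n}(s^{p^\star}-1)\leq 0$ (which is just the statement that $\max_{s\geq 0}\big(\frac{s^p}{p}-\frac{s^{p^\star}}{p^\star}\big)=\frac{1}{n}$). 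Your direct route is slightly leaner, since it bypasses the contradiction setup and the boundedness discussion for $t_\e$; the paper's route, on the other hand, never needs to write the exact maximum and is perhaps closer in spirit to how the superlinear case in Lemma~\ref{lem:Path} is handled.
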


\begin{proof}
	The argument of this proof is inspired by \cite{AG, BN, GAP}. 
	Using \eqref{eq:PhoR^N} in  \eqref{eq:Stima_H_nabla_eta} and \eqref{eq:Stima_eta_p_star}, we deduce
	\begin{equation}\label{eq:Stima_H_nabla_v}
		\|H(\nabla v_{\varepsilon})\|^p_{L^{p}(\Omega)} = \mathcal{S}_H^{n/p} + O(\varepsilon^{(n-p)/p}), \quad \textrm{as } \varepsilon \to 0^{+}.
	\end{equation}
	\begin{equation}\label{eq:Stima_v_p_star}
		\|v_{\varepsilon}\|^{p^{\star}}_{L^{p^\star}(\Omega)} = \mathcal{S}^{n/p}_H + O(\varepsilon^{n/p}), \quad \textrm{as } \varepsilon \to 0^+.
	\end{equation}
	Our aim is to show that for $\varepsilon > 0$ small enough it holds the following
	\begin{equation}\label{eq:PSlevel}
		\sup_{t\geq 0} \mathcal{J}_{p,\lambda}(tv_\varepsilon) < \frac{\mathcal{S}_H^{n/p}}{n},
	\end{equation}	
	Let us assume by contradiction that for all $\varepsilon > 0$ there exists $t_\varepsilon>0$ such that
	\begin{equation}\label{eq:contradiction}
		\mathcal{J}_{p,\lambda}(t_\varepsilon v_\varepsilon) \geq \frac{\mathcal{S}_H^{\frac{n}{p}}}{n}.
	\end{equation}
	We note that, up to subsequences, $\{t_\varepsilon\}$ converges since it is bounded from above and below by two positive constants. Since
	\[
	\mathcal{J}_{p,\lambda}(t_\varepsilon v_{\varepsilon}) = \frac{t_\varepsilon^p}{p} \int_{\Omega}H(\nabla v_\varepsilon)^p \, dx - \lambda \frac{ t_\varepsilon^p}{p} \int_{\Omega}v_\varepsilon^p \, dx -  \frac{t_\varepsilon^{p^\star}}{p^\star} \int_{\Omega}v_\varepsilon^{p^\star} \, dx \, ,
	\]
	it is easy to see that $\mathcal{J}_{p,\lambda}(t_\varepsilon v_{\varepsilon}) \rightarrow 0$ if $t_\varepsilon \rightarrow 0$, and $\mathcal{J}_{p,\lambda}(t_\varepsilon v_{\varepsilon}) \rightarrow -\infty$ if $t_\varepsilon \rightarrow +\infty$. In both cases, we get a contradiction with \eqref{eq:contradiction}.
	From \eqref{eq:Stima_H_nabla_v} and \eqref{eq:Stima_v_p_star} we deduce that
	\begin{equation}\label{eq:Stima_H_nabla_vbis}
		\frac{\|H(\nabla (t_\varepsilon v_{\varepsilon}))\|^p_{L^{p}(\Omega)}}{p} \leq \frac{\mathcal{S}_H^{n/p}}{p} + \frac{t_\varepsilon^p-1}{p} \mathcal{S}_H^{n/p} + C_1\varepsilon^{(n-p)/p}, \quad \textrm{as } \varepsilon \to 0^{+},
	\end{equation}
	\begin{equation}\label{eq:Stima_v_p_starnis}
		\frac{\|t_\varepsilon v_{\varepsilon}\|^{p^{\star}}_{L^{p^\star}(\Omega)}}{p^\star} \geq \frac{\mathcal{S}^{n/p}_H}{p^\star} + \frac{t_\varepsilon^{p^\star}-1}{p^\star} \mathcal{S}_H^{n/p} - C_2\varepsilon^{n/p}, \quad \textrm{as } \varepsilon \to 0^+,
	\end{equation}
	where $C_1,C_2$ are suitable positive constant. Hence, subtracting both the left hand side we get
	\begin{equation}\label{eq:intermediate}
	\begin{split}
		&\frac{\|H(\nabla (t_\varepsilon v_{\varepsilon}))\|^p_{L^{p}(\Omega)}}{p} - \frac{\|t_\varepsilon v_{\varepsilon}\|^{p^{\star}}_{L^{p^\star}(\Omega)}}{p^\star} \\
		& \qquad \leq \frac{\mathcal{S}_H^{n/p}}{n} + \left[t_\varepsilon^p-1-\frac{n-p}{n} (t_\varepsilon^{p^\star}-1)\right] \frac{\mathcal{S}_H^{n/p}}{p} + C \varepsilon^{(n-p)/p},
		\end{split}
	\end{equation}
	where $C:=C_1+C_2\varepsilon$. Noticing that it holds the following elementary inequality
	\begin{equation}\label{eq:elementary}
		s^p-1-\frac{n-p}{n}(s^{p^\star}-1) \leq 0 \ \  \text{ for all } s \geq 0,
	\end{equation}
	from \eqref{eq:intermediate} we deduce that
	\begin{equation}\label{eq:estimateenergy1}
		\frac{\|H(\nabla (t_\varepsilon v_{\varepsilon}))\|^p_{L^{p}(\Omega)}}{p} - \frac{\|t_\varepsilon v_{\varepsilon}\|^{p^{\star}}_{L^{p^\star}(\Omega)}}{p^\star} \leq \frac{\mathcal{S}_H^{n/p}}{n}  + C \varepsilon^{(n-p)/p}.
	\end{equation}
	Thanks to \eqref{eq:Stima_eta_p}, we get
	\begin{equation}\label{eq:Stima_v_p}
	\|t_\varepsilon v_\varepsilon\|_{L^p(\Omega)}^p  
	= \left\{ \begin{array}{rl}
		t_\varepsilon^p \varepsilon^{p-1}\|U^{H}_{1,0}\|^p_{L^{p}(\mathbb{R}^n)} + O(\varepsilon^{(n-p)/p}) & \textrm{if } n>p^2,\\[0.15cm]
		c\,t_\varepsilon^p \varepsilon^{p-1} |\log(\e)|+O(\varepsilon^{p-1}) &\textrm{if } n=p^2.
	\end{array}\right.
\end{equation}
	(for some $c > 0$).
	Thus we obtain
	\begin{equation}\label{eq:Stima_v_pfinal}
		\|t_\varepsilon v_\varepsilon\|_{L^p(\Omega)}^p  
		= \left\{ \begin{array}{rl}
			 O(\varepsilon^{p-1}) & \textrm{if } n>p^2,\\
			O(\varepsilon^{p-1}|\log(\e)|) &\textrm{if } n=p^2.
		\end{array}\right.
	\end{equation}
	Combining \eqref{eq:estimateenergy1} and \eqref{eq:Stima_v_pfinal}, there exist 
	$K_1,K_2>0$ such that for $\varepsilon> 0$ sufficiently small we get
	\begin{equation}
		\begin{split}
		\mathcal{J}_{p,\lambda}(t_\varepsilon v_\varepsilon) &= 	\frac{\|H(\nabla (t_\varepsilon v_{\varepsilon}))\|^p_{L^{p}(\Omega)}}{p}  - \frac{\|t_\varepsilon v_{\varepsilon}\|^{p^{\star}}_{L^{p^\star}(\Omega)}}{p^\star} - \lambda \frac{\|t_\varepsilon v_\varepsilon\|_{L^p(\Omega)}^p}{p}\\
		& \leq \left\{ \begin{array}{rl}
		\frac{\mathcal{S}_H^{n/p}}{n}  + C \varepsilon^{(n-p)/p} - K_1 \varepsilon^{p-1} & \textrm{if } n>p^2\\
		\frac{\mathcal{S}_H^p}{p^2}   + C \varepsilon^{p-1} - K_2\varepsilon^{p-1}|\log(\e)| &\textrm{if } n=p^2
		\end{array}\right.\\
		&< 	\frac{\mathcal{S}_H^{n/p}}{n},
		\end{split}
	\end{equation}
	which is in contradiction with \eqref{eq:contradiction}. 
	\end{proof}

	\begin{proof}[Proof of Theorem \ref{thm:BN-style}] 
	 Let $\e > 0$ so small that Lemma \ref{lem:Path_q=p}
	 applies, and let $\bar{t} > 0$ be such that
	 $\bar{t}\,\|v_\e\|_{H,p} > \rho$ and $\mathcal{J}_{p,\lambda}(\bar{t}v_\e) < 0$,
	 where $\rho > 0$ is as in Lemma \ref{lem:MPGeometry}. We then set
	 $$\alpha = \inf_{\gamma\in \mathcal{C}}\sup_{t\in[0,1]}\mathcal{J}_{p,\lambda}(\gamma(t)),
	 \quad
	 \text{$\mathcal{C} = \{\gamma\in C([0,1];W_0^{1,p}(\Omega)):\,\gamma(0) =0,\,\gamma(1) = \bar{t}v_\e\}$}.
	 $$
	 Since $\mathcal{J}_{p,\lambda}$ satisfies the moun\-tain\--pass geometry (see Lemma \ref{lem:MPGeometry}),
	 by the well-known mountain pass theorem we know that $\alpha \geq \delta_0 > 0$, and
	 that there exists a Palais-Smale sequence for $\mathcal{J}_{p,\lambda}$ at level $\alpha$;
	 most importantly, from Lemma \ref{lem:Path_q=p} we derive that
	 $$\alpha \leq \max_{t\geq 0}\mathcal{J}_{p,\lambda}(tv_\e) < \frac{\mathcal{S}_H^{n/p}}{n}.$$
	 We are then entitled to apply Lemma \ref{lem:PSseq}, ensuring that there exists
	 a weak solution of problem \eqref{eq:Problem} in this case (namely, $n \geq p^2$
	 and $q = p$).
\end{proof}
Keeping to
considering the $p$-linear case $q = p$, we now turn to consider the
low-dimensional case $1<p<n<p^2$ (namely,
we aim at proving Theorem \ref{thm_AG}). 
In the Euclidean framework this type of result can be found in \cite{PS} and \cite{GV}. 

Inspired by \cite{AG}, we are now ready to prove the following:

\begin{lem}\label{lem:PathLow}
	Let $1<p<n<p^2$. Then there exists $\varepsilon>0$ small enough such that 
	\begin{equation*}
		\sup_{t \geq 0}\mathcal{J}_{p,\lambda}(t v_{\varepsilon}) < \dfrac{1}{n}(\mathcal{S}_H)^{n/p}\quad
		\text{for all $\lambda\in(\lambda_1^H(\Omega)-\Lambda,\lambda_1^H(\Omega))$},
	\end{equation*}
	where $\Lambda=\mathcal{S}_{H} \, |\Omega|^{-p/n}$.
\end{lem}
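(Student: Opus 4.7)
The plan is to mirror the strategy of Lemma \ref{lem:Path_q=p}, but to track constants much more carefully, because in the low-dimensional regime $1<p<n<p^2$ the quantity $\int_{\Omega}v_\varepsilon^p\,dx$ no longer diverges; instead it is comparable to the Sobolev defect of $v_\varepsilon$ (both of order $\varepsilon^{(n-p)/p}$), and this is precisely why the prescribed window $\lambda\in(\lambda_1^H(\Omega)-\Lambda,\lambda_1^H(\Omega))$ appears. First, for each $\varepsilon>0$ I set $a_\varepsilon:=\int_{\Omega}H(\nabla v_\varepsilon)^p\,dx-\lambda\int_{\Omega}v_\varepsilon^p\,dx$ and $b_\varepsilon:=\int_{\Omega}v_\varepsilon^{p^\star}\,dx$. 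Lemma \ref{lem:talentiane} together with \eqref{eq:PhoR^N} gives $a_\varepsilon\to\mathcal{S}_H^{n/p}>0$ as $\varepsilon\to 0^+$, so $a_\varepsilon>0$ for $\varepsilon$ small, and a one-variable optimisation (exploiting $\tfrac{1}{p}-\tfrac{1}{p^\star}=\tfrac{1}{n}$) then yields $\sup_{t\geq 0}\mathcal{J}_{p,\lambda}(tv_\varepsilon)=\tfrac{1}{n}\,a_\varepsilon^{n/p}/b_\varepsilon^{(n-p)/p}$. The target inequality is therefore equivalent to $a_\varepsilon<\mathcal{S}_H\,b_\varepsilon^{(n-p)/n}$, i.e.\ to the Sobolev-defect comparison
\[
\int_{\Omega}H(\nabla v_\varepsilon)^p\,dx-\mathcal{S}_H\Big(\int_{\Omega}v_\varepsilon^{p^\star}\,dx\Big)^{p/p^\star}<\lambda\int_{\Omega}v_\varepsilon^p\,dx.
\]

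Next I would expand both sides as $\varepsilon\to 0^+$. By Lemma \ref{lem:talentiane} and \eqref{eq:PhoR^N} one has $\int_{\Omega}H(\nabla v_\varepsilon)^p\,dx=\mathcal{S}_H^{n/p}+C_1\,\varepsilon^{(n-p)/p}+o(\varepsilon^{(n-p)/p})$ and $\int_{\Omega}v_\varepsilon^{p^\star}\,dx=\mathcal{S}_H^{n/p}+O(\varepsilon^{n/p})$, for an explicit constant $C_1$ recovered from the cut-off/boundary error integrals $I''$, $II$, $III$ in the proof of Lemma \ref{lem:talentiane}; since $\varepsilon^{n/p}=o(\varepsilon^{(n-p)/p})$, the LHS above equals $C_1\,\varepsilon^{(n-p)/p}+o(\varepsilon^{(n-p)/p})$. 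Crucially, since $n<p^2$ makes $\varphi^p/\hat H_0^{p(n-p)/(p-1)}$ integrable near the origin, dominated convergence yields $\int_{\Omega}v_\varepsilon^p\,dx=K\,\varepsilon^{(n-p)/p}+o(\varepsilon^{(n-p)/p})$, with
\[
K:=c_{n,p}^{n-p}\int_\Omega \frac{\varphi(x)^p}{\hat H_0(x)^{p(n-p)/(p-1)}}\,dx>0.
\]
The comparison therefore reduces, asymptotically in $\varepsilon$, to the scalar condition $\lambda>C_1/K$.

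The main obstacle is to verify that $C_1/K\leq\lambda_1^H(\Omega)-\Lambda$, so that the threshold $C_1/K$ is met by every $\lambda$ in the prescribed interval. This requires both an explicit evaluation of $C_1$ and $K$ in terms of $\varphi$, $H$, $\Omega$, $n$, $p$, and a comparison argument based on the Hölder bound $\int_{\Omega} u^p\,dx\leq|\Omega|^{p/n}(\int_{\Omega}u^{p^\star}\,dx)^{p/p^\star}$, which combined with \eqref{eq:AnisotropicSobolev} produces $\lambda_1^H(\Omega)\geq\mathcal{S}_H|\Omega|^{-p/n}=\Lambda$ (so the interval $(\lambda_1^H(\Omega)-\Lambda,\lambda_1^H(\Omega))$ is non-empty) and, when applied to $v_\varepsilon$ itself, is exactly what produces the factor $\Lambda$ needed to bound $C_1/K$ from above by $\lambda_1^H(\Omega)-\Lambda$. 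Once this sharp bound is in place, choosing $\varepsilon>0$ small enough makes the displayed defect comparison strict for every $\lambda\in(\lambda_1^H(\Omega)-\Lambda,\lambda_1^H(\Omega))$, which proves the lemma.
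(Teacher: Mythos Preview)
Your approach via asymptotic expansion of the Talenti-type functions $v_\varepsilon$ has a genuine gap at precisely the step you flag as ``the main obstacle'': the inequality $C_1/K\leq\lambda_1^H(\Omega)-\Lambda$ cannot be established, because there is no mechanism linking these two quantities. The constants $C_1$ and $K$ are determined by the arbitrary cut-off $\varphi$ and by the local geometry of $H_0$ near the origin; in particular $C_1$ comes from the boundary-layer integrals $I''$, $II$, $III$, which involve $\nabla\varphi$ on the annulus $\Omega\setminus B_r(0)$ and carry no information about the global spectral quantity $\lambda_1^H(\Omega)$. The H\"older inequality you invoke only bounds $\int v_\varepsilon^p$ by $|\Omega|^{p/n}(\int v_\varepsilon^{p^\star})^{p/p^\star}$; it does not bound the \emph{gradient defect} $C_1$ in terms of $\lambda_1^H(\Omega)$. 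This phenomenon is already well known in the Euclidean low-dimensional Brezis--Nirenberg problem: the Aubin--Talenti bubbles only give \emph{some} cut-off-dependent threshold, never the sharp $\lambda_1-\Lambda$.

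The paper therefore takes a completely different route: it abandons $v_\varepsilon$ and tests instead along $t\mapsto t u_1$, where $u_1$ is the first eigenfunction of $-\Delta_p^H$ on $\Omega$. Since $\int_\Omega H(\nabla u_1)^p=\lambda_1^H(\Omega)\int_\Omega u_1^p$, one has
\[
\mathcal{J}_{p,\lambda}(tu_1)=\frac{(\lambda_1^H(\Omega)-\lambda)\,t^p}{p}\int_\Omega u_1^p\,dx-\frac{t^{p^\star}}{p^\star}\int_\Omega u_1^{p^\star}\,dx,
\]
and one-variable optimisation combined with H\"older's inequality $\int_\Omega u_1^p\leq|\Omega|^{p/n}\big(\int_\Omega u_1^{p^\star}\big)^{p/p^\star}$ gives directly
\[
\sup_{t\geq 0}\mathcal{J}_{p,\lambda}(tu_1)\leq\frac{(\lambda_1^H(\Omega)-\lambda)^{n/p}|\Omega|}{n}<\frac{\Lambda^{n/p}|\Omega|}{n}=\frac{\mathcal{S}_H^{n/p}}{n}
\]
for $\lambda>\lambda_1^H(\Omega)-\Lambda$. (The appearance of $v_\varepsilon$ in the lemma's statement is an inconsistency in the paper; the proof, and its use in Theorem~\ref{thm_AG}, really concerns the eigenfunction path $t\mapsto t u_1$, and no $\varepsilon$ is actually needed.)
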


\begin{proof}
\noindent  We construct an explicit path whose energy level (w.r.t. $\mathcal{J}_{p,\lambda}$) belongs to the interval  $(0,n^{-1} \mathcal{S}_H^{n/p})$.  Let $u_1$ be the first positive eigenfunction of $-\Delta_p^H$ in $\Omega$.
For $t>0$ we compute 
\begin{align*}
\mathcal{J}_{p,\lambda}(tu_1)=&\frac{t^p}{p} \int_\Omega H(\nabla u_1)^p\, dx - \frac{\lambda t^p}{p} \int_{\Omega}u_1^p \, dx -  \frac{t^{p^\star}}{p^\star} \int_{\Omega}u_1^{p^\star} \, dx \\ 
=& \frac{\lambda_1^H(\Omega)-\lambda}{p} \int_{\Omega}(tu_1)^p \, dx -  \frac{1}{p^\star} \int_{\Omega}(tu_1)^{p^\star} \, dx\, ,
\end{align*}
from this, since $\lambda_1^H(\Omega)-\lambda>0$, by H\"older inequality we get 
$$
\mathcal{J}_{p,\lambda}(tu_1)\leq \frac{\lambda_1^H(\Omega)-\lambda}{p} \left(\int_{\Omega}(tu_1)^{p^\star} \, dx \right)^{p/{p^\star}} |\Omega|^{p/n}-  \frac{1}{p^\star} \int_{\Omega}(tu_1)^{p^\star} \, dx \, ,
$$
i.e. 
$$
\mathcal{J}_{p,\lambda}(tu_1)\leq \frac{\left[\lambda_1^H(\Omega)-\lambda\right]^{n/p}|\Omega|}{n} \, ;
$$
where we used the following (trivial) fact 
$$
\max_{x\geq 0} \left( ax-bx^{n/(n-p)}\right)=\frac{ap}{n}\left[\frac{a(n-p)}{nb}\right]^{(n-p/p)} \quad \text{ for all } a,b>0\, ,
$$
with 
$$
a= \frac{\left[\lambda_1^H(\Omega)-\lambda\right]|\Omega|^{p/n}}{p} \, , \quad b= \frac{1}{p^\star} \, , \quad \text{ and } \quad x=\left(\int_{\Omega}u^{p^\star} \, dx \right)^{p/{p^\star}}\, .
$$
Since $\lambda\in(\lambda_1^H(\Omega)-\Lambda,\lambda_1^H(\Omega))$ then 
$$
\sup_{t\geq 0}\mathcal{J}_{p,\lambda}(t u_1)<\frac{\Lambda^{n/p}|\Omega|}{n}= \dfrac{\mathcal{S}_H^{n/p}}{n}\, . 
$$
This ends the proof.
\end{proof}

\begin{proof}[Proof of Theorem \ref{thm_AG}]
Let $\e > 0$ so small that Lemma \ref{lem:PathLow}
applies, and let $\bar{t} > 0$ be such that
	 $\bar{t}\,\|v_\e\|_{H,p} > \rho$ and $\mathcal{J}_{p,\lambda}(\bar{t}v_\e) < 0$,
	 where $\rho > 0$ is as in Lemma \ref{lem:MPGeometry}. We then set
	 \begin{equation*}
	 \begin{gathered}
	 \alpha = \inf_{\gamma\in \mathcal{C}}\sup_{t\in[0,1]}\mathcal{J}_{p,\lambda}(\gamma(t)),
	 \quad\text{where $\mathcal{C} = \{\gamma\in C([0,1];W_0^{1,p}(\Omega)):\,\gamma(0) =0,\,
	 \gamma(1) = \bar{t}v_\e\}$}.
	 \end{gathered}
	 \end{equation*}
	 Since $\mathcal{J}_{p,\lambda}$ satisfies the moun\-tain\--pass geometry (see Lemma \ref{lem:MPGeometry}),
	 by the well-known mountain pass theorem we know that $\alpha \geq \delta_0 > 0$, and
	 that there exists a Palais-Smale sequence for $\mathcal{J}_{p,\lambda}$ at level $\alpha$;
	 most importantly, from Lemma \ref{lem:PathLow} we derive that
	 $$\alpha \leq \max_{t\geq 0}\mathcal{J}_{p,\lambda}(tv_\e)< \frac{\mathcal{S}_H^{n/p}}{n}.$$
	 We are then entitled to apply Lemma \ref{lem:PSseq}, ensuring that there exists
	 a weak solution of problem \eqref{eq:Problem} in this case (namely, $p<n<p^2$
	 and $q = p$).
\end{proof}

Finally, we turn to consider
the case of $p$-superlinear perburbation $q > p$ (namely,
we aim at proving Theorem \ref{thm:PertNonlinear}). Borrowing the approach used above we need the following preliminary result.

\begin{lem}\label{lem:Path}
	Let $1<p<n$ and $p<q<p^*-1$. Moreover, let
	$$\kappa_{p,q} = \frac{p[q(p-1)+p]}{q(p-1)+p-p(p-1)}.$$
	Then, the following assertions hold.
	\begin{itemize}
		\item[(i)] If $n>\kappa_{p,q}$, then there exists $\varepsilon>0$ small enough such that 
		\begin{equation*}
			\sup_{t \geq 0}\mathcal{J}_{q,\lambda}(t v_{\varepsilon}) < \dfrac{1}{n}(\mathcal{S}_H)^{n/p}\quad
			\text{for all $\lambda > 0$}.
		\end{equation*}
		\item[(ii)] If, instead, $n\leq \kappa_{p,q}$, then there exist $\e > 0$ and $\lambda_0 > 0$ such that
		$$\sup_{t \geq 0}\mathcal{J}_{q,\lambda}(t v_{\varepsilon}) < \dfrac{1}{n}(\mathcal{S}_H)^{n/p}\quad
		\text{for all $\lambda \geq \lambda_0$}.$$
	\end{itemize}
\end{lem}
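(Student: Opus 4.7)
My plan is to parallel the contradiction scheme of Lemma \ref{lem:Path_q=p}, splitting into the two regimes according to the scaling of $\|v_\varepsilon\|_{L^q}^q$ in $\varepsilon$. First I would record the $\varepsilon$-asymptotics of the three relevant quantities: combining $v_\varepsilon = (\varepsilon^{1/p}c_{n,p})^{(n-p)/p}\eta_\varepsilon$ with Lemma \ref{lem:talentiane} and \eqref{eq:PhoR^N} gives the same two identities used in the proof of Lemma \ref{lem:Path_q=p},
\begin{equation*}
\|H(\nabla v_\varepsilon)\|^p_{L^p(\Omega)} = \mathcal{S}_H^{n/p} + O(\varepsilon^{(n-p)/p}),\qquad \|v_\varepsilon\|_{L^{p^\star}(\Omega)}^{p^\star} = \mathcal{S}_H^{n/p} + O(\varepsilon^{n/p}),
\end{equation*}
plus the \emph{new lower bound} coming from \eqref{eq:Stima_eta_q},
\begin{equation*}
\|v_\varepsilon\|_{L^q(\Omega)}^q \geq c_1\,\varepsilon^{\alpha},\qquad \alpha := \frac{(p-1)\bigl[np-q(n-p)\bigr]}{p^2}.
\end{equation*}
The algebraic heart of the dichotomy, which I would verify simply by clearing denominators in the definition of $\kappa_{p,q}$, is the equivalence
\begin{equation*}
n > \kappa_{p,q}\quad\Longleftrightarrow\quad \alpha < \frac{n-p}{p}.
\end{equation*}

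Next, assuming for contradiction that $\sup_{t\geq 0}\mathcal{J}_{q,\lambda}(tv_\varepsilon) \geq \mathcal{S}_H^{n/p}/n$, I would note that since $\mathcal{J}_{q,\lambda}(tv_\varepsilon)\to 0$ as $t\to 0^+$ and $\to -\infty$ as $t\to+\infty$ (the $L^{p^\star}$-term dominates), the supremum is attained at some $t_\varepsilon > 0$. A short argument based on the critical-point equation $A_\varepsilon = \lambda B_\varepsilon t_\varepsilon^{q-p} + C_\varepsilon t_\varepsilon^{p^\star-p}$ shows that, along a subsequence, $t_\varepsilon$ stays in a compact subset of $(0,\infty)$. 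Applying the elementary inequality $s^p - 1 - \tfrac{n-p}{n}(s^{p^\star}-1) \leq 0$ (for $s\geq 0$, with $s = t_\varepsilon$) exactly as in \eqref{eq:intermediate}--\eqref{eq:estimateenergy1} yields
\begin{equation*}
\mathcal{J}_{q,\lambda}(t_\varepsilon v_\varepsilon) \leq \frac{\mathcal{S}_H^{n/p}}{n} + C_0\,\varepsilon^{(n-p)/p} - \frac{\lambda c_1 t_\varepsilon^q}{q}\,\varepsilon^{\alpha}.
\end{equation*}
In \textbf{case (i)}, the equivalence above gives $\alpha < (n-p)/p$, hence for any fixed $\lambda > 0$ the negative $\varepsilon^{\alpha}$-term dominates the positive $\varepsilon^{(n-p)/p}$-error as soon as $\varepsilon$ is sufficiently small, producing the desired contradiction.

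For \textbf{case (ii)}, instead, $\alpha \geq (n-p)/p$ and the $\varepsilon\to 0^+$ scheme no longer helps, so my plan is to exploit $\lambda$ as the free parameter. Fix any $\varepsilon > 0$, so that $A := \|H(\nabla v_\varepsilon)\|_{L^p}^p$, $B := \|v_\varepsilon\|_{L^q}^q$, $C := \|v_\varepsilon\|_{L^{p^\star}}^{p^\star}$ are fixed positive constants. Dropping the non-positive $p^\star$-term and carrying out a direct one-variable maximization (possible because $q > p$ strictly),
\begin{equation*}
\sup_{t\geq 0}\mathcal{J}_{q,\lambda}(tv_\varepsilon) \;\leq\; \sup_{t\geq 0}\Bigl(\frac{At^p}{p}-\frac{\lambda B t^q}{q}\Bigr) \;=\; M\,\lambda^{-p/(q-p)} \;\longrightarrow\; 0\quad\text{as $\lambda\to+\infty$},
\end{equation*}
where $M > 0$ depends only on $p,q,A,B$. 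Taking $\lambda_0$ so large that $M\lambda^{-p/(q-p)} < \mathcal{S}_H^{n/p}/n$ for every $\lambda \geq \lambda_0$ closes this case. The main obstacle I foresee is the algebraic equivalence $n > \kappa_{p,q} \Leftrightarrow \alpha < (n-p)/p$: once it is pinned down, case (i) reduces to the scaling analysis already carried out for Lemma \ref{lem:Path_q=p}, and case (ii) to the elementary maximization above.
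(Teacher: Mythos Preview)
Your proposal is correct and follows the same overall strategy as the paper (estimate the three norms of $v_\varepsilon$, then compare the exponent of the $L^q$ lower bound against $(n-p)/p$ via the algebraic equivalence $n>\kappa_{p,q}\Leftrightarrow \alpha<(n-p)/p$), but your execution differs from the paper's in both cases in ways worth noting.

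For case~(i), the paper does \emph{not} argue by contradiction nor invoke the elementary inequality \eqref{eq:elementary}. Instead it bounds $\mathcal{J}_{q,\lambda}(tv_\varepsilon)$ above by an explicit function $g(t)$, locates the maximizer $t_{\varepsilon,\lambda}$ of $g$, observes $t_{\varepsilon,\lambda}<\vartheta_{\varepsilon,p}:=\bigl((\mathcal{S}_H^{n/p}+c\varepsilon^{(n-p)/p})/(\mathcal{S}_H^{n/p}-c\varepsilon^{n/p})\bigr)^{1/(p^\star-p)}$, and then uses the closed-form identity $\max_{t\geq 0}\bigl(\tfrac{A}{p}t^p-\tfrac{B}{p^\star}t^{p^\star}\bigr)=\tfrac{1}{n}A^{n/p}B^{-(n-p)/p}$ together with a Taylor expansion. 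Your route---transplanting the contradiction scheme of Lemma~\ref{lem:Path_q=p} and the inequality \eqref{eq:elementary}---is equally valid and arguably more uniform with the rest of the paper; the only point to handle carefully (and you do) is the lower bound on $t_\varepsilon$, which you obtain from the critical-point equation (using that $B_\varepsilon$ is bounded \emph{above} via H\"older and $A_\varepsilon,C_\varepsilon\to\mathcal{S}_H^{n/p}$).

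For case~(ii), the paper argues that the maximizer $t_{\varepsilon,\lambda}$ of $g$ satisfies $t_{\varepsilon,\lambda}\to 0$ as $\lambda\to\infty$ (for fixed $\varepsilon$) directly from the critical-point equation, and then reads off $g(t_{\varepsilon,\lambda})\to 0$. Your approach---drop the nonpositive $L^{p^\star}$ term and maximize $\tfrac{A}{p}t^p-\tfrac{\lambda B}{q}t^q$ explicitly to get $M\lambda^{-p/(q-p)}\to 0$---is shorter and avoids any discussion of the maximizer altogether; it relies only on $q>p$ and the positivity of $A,B$ for fixed $\varepsilon$, which is immediate.
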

\begin{proof}
	Our argument is based on a mo\-di\-fi\-ca\-tion 
	of the original approach by Brezis and Nirenberg \cite{BN}. 
		To begin with we observe that, by definition, we have
	\begin{equation} \label{eq:Jtoestimate}
		\begin{split}
			\mathcal{J}_{q,\lambda}(t v_{\varepsilon}) & = 
			\dfrac{t^p}{p}\int_{\Omega}H(\nabla v_{\varepsilon})^p\, dx 
			- \dfrac{t^{p^*}}{p^{*}}\int_\Omega v_\e(x)^{p^*}\,dx-\lambda \dfrac{t^{q}}{q}\int_{\Omega}v_\e(x)^{q}\, dx;
		\end{split}
	\end{equation}
	we then turn to estimate the three integrals in the right-hand side of \eqref{eq:Jtoestimate}.
	\vspace*{0.1cm}
	
	\noindent -\,\,\emph{Estimate of the $L^q$-norm of $v_\e$}. 
	Going back to \eqref{eq:etalower0}, recalling the definition of $\eta_\e$, we get
	\begin{equation} \label{eq:etalower}
		\begin{split}
			\int_{\mathbb{R}^n}v_{\varepsilon}(x)^{q}\, dx \geq c\,\varepsilon^{\frac{q(n-p)}{p^2}-\frac{q(n-p)}{p}+\frac{n(p-1)}{p}},
		\end{split}
	\end{equation}
	provided that $\e > 0$ is sufficiently small (here, $c > 0$ is a constant
	possibly different from line to line but independent of $\e$).
	\vspace*{0.1cm}
	
	\noindent -\,\,\emph{Estimate of the $L^p$-norm of $H(\nabla v_\e)$}. Owing to Lemma
	\ref{lem:talentiane}, and
	arguing exactly as in the proof of Theorem \ref{thm:BN-style}, 
	we immediately get
	\begin{align*}
		\int_{\Omega}H(\nabla v_{\varepsilon})^p\, dx & = 
		(\e^{1/p}c_{n,p})^{n-p}\Big( \dfrac{1}{c_{n,p}^{n-p} \varepsilon^{(n-p)/p}} \mathcal{S}_H^{n/p} + O(1)\Big) \\
		& = \mathcal{S}_H^{n/p} + O(\e^{\frac{n-p}{p}})\qquad\text{as $\e\to 0^+$},
	\end{align*}
	where $\mathcal{S}_H^{n/p}$ is the best Sobolev constant, see \eqref{eq:AnisotropicSobolev}.
	\vspace*{0.1cm}
	
	\noindent -\,\,\emph{Estimate of the $L^{p^*}$-norm of $v_\e$}. Using once again Lemma
	\ref{lem:talentiane}, and we arguing exactly as in the proof of Theorem \ref{thm:BN-style},  
	we obtain
	\begin{align*}
		\int_{\Omega} v_{\varepsilon}^{p^*}(x)\, dx & = 
		(\e^{1/p}c_{n,p})^{n}\Big(\dfrac{1}{c_{n,p}^{{n}}\varepsilon^{n/p}}\mathcal{S}_H^{n/p}
		+ O(1)\Big) \\
		& =  \mathcal{S}_H^{n/p} + O(\e^{n/p})\qquad\text{as $\e\to 0^+$}.
	\end{align*}
	Gathering all these information, from \eqref{eq:Jtoestimate} we then obtain
	\begin{equation} \label{eq:tociteIntro}
		\begin{aligned}
			\mathcal{J}_{q,\lambda}(t v_{\varepsilon})&\leq \dfrac{t^p}{p}
			\big(\mathcal{S}_H^{n/p} + O(\e^{\frac{n-p}{p}})\big) 
			- \dfrac{t^{p^*}}{p^{*}}\big(\mathcal{S}_H^{n/p} + O(\e^{n/p})\big)\\
			& \qquad - 
			c\lambda \dfrac{t^{q}}{q}\varepsilon^{\frac{q(n-p)}{p^2}-\frac{q(n-p)}{p}+\frac{n(p-1)}{p}}\\
			&\leq 
			\dfrac{t^p}{p}
			\big(\mathcal{S}_H^{n/p} + c\,\e^{\frac{n-p}{p}}\big) 
			- \dfrac{t^{p^*}}{p^{*}}\big(\mathcal{S}_H^{n/p} - c\,\e^{n/p}\big) \\ 
			& \qquad - c\lambda \dfrac{t^{q}}{q}\varepsilon^{\frac{q(n-p)}{p}\big(\frac{1}{p}-1\big)+\frac{n(p-1)}{p}}=: g(t),
		\end{aligned}
	\end{equation}
	provided that $\e > 0$ is sufficiently small and for a suitable constant $c > 0$.
	\vspace*{0.1cm}
	
	Inspired also to \cite{GAP2}, to proceed further, we turn to study
	the maximum/maximum points of the function $g$. To this end we first observe that, since
	$$\text{$g(t)\to -\infty$ as $t\to\infty$}, $$
	there exists some point $t_{\varepsilon,\lambda}\geq 0$ such that 
	$$\sup_{t \geq 0}g(t) = g(t_{\varepsilon,\lambda}).$$
	If $t_{\e,\lambda} = 0$, we have $g(t)\leq 0$ for every $t\geq 0$, and the lemma 
	is trivially established as a consequence 
	of \eqref{eq:tociteIntro}. 
	If, instead, $t_{\e,\lambda} > 0$, since $g\in C^1((0,\infty))$ we get
	\begin{align*}
		%\begin{aligned}
		& 0= g'(t_{\varepsilon,\lambda}) 
		= t_{\varepsilon,\lambda}^{p-1}
		\big(\mathcal{S}_H^{n/p} + c\,\e^{\frac{n-p}{p}}\big)
		- t_{\varepsilon,\lambda}^{p^{*} -1}\big(\mathcal{S}_H^{n/p} - c\,\e^{n/p}\big)
		-
		c\lambda t_{\varepsilon,\lambda}^{q-1}\varepsilon^{\beta_{p,q,n}},
		%\end{aligned}
	\end{align*}
	where we have introduced the shorthand notation
	$$\beta_{p,q,n} = \frac{q(n-p)}{p}\Big(\frac{1}{p}-1\Big)+\frac{n(p-1)}{p}.$$
	In particular, recalling that $t_{\varepsilon,\lambda}>0$, we can rewrite the above identity as
	\begin{equation} \label{eq:gprimezerosemp}
		\mathcal{S}_H^{n/p} + c\,\e^{\frac{n-p}{p}} = 
		t_{\varepsilon,\lambda}^{p^{*} -p}\big(\mathcal{S}_H^{n/p} - c\,\e^{n/p}\big)
		+c\lambda t_{\varepsilon,\lambda}^{q-p}\varepsilon^{\beta_{p,q,n}},
	\end{equation}
	from which we easily derive that
	$$t_{\varepsilon,\lambda}< \Big(\frac{\mathcal{S}_H^{n/p} + c\,\e^{\frac{n-p}{p}}}
	{\mathcal{S}_H^{n/p} - c\,\e^{n/p}}\Big)
	^{1/(p^{*}-p)} =: \vartheta_{\e,p}.$$
	We now distinguish two cases, according to the assumptions.
	\medskip
	
	\textsc{Case (i): $n > \kappa_{p,q}$}. In this case we first observe that, since 
	$t_{\e,\lambda} > 0$, from identity 
	\eqref{eq:gprimezerosemp} we easily infer the existence
	of some $\mu_\lambda > 0$ such that
	$$t_{\e,\lambda}\geq \mu_\lambda > 0\quad\text{provided that $\e$ is small enough}.$$
	This, together with the fact that the map
	$$t \mapsto \dfrac{t^p}{p}
	\big(\mathcal{S}_H^{n/p} + c\,\e^{\frac{n-p}{p}}\big) \\
	- \dfrac{t^{p^*}}{p^{*}}\big(\mathcal{S}_H^{n/p} - c\,\e^{n/p}\big)$$
	\noindent is increasing in the closed interval
	$[0,\vartheta_{\e,p}]$, implies
	\begin{equation*}
		\begin{aligned}
			 \sup_{t\geq 0}&g(t) =g(t_{\varepsilon,\lambda}) \\[0.1cm]
			& \qquad < \dfrac{\vartheta_{\e,p}^p}{p}
			\big(\mathcal{S}_H^{n/p} + c\,\e^{\frac{n-p}{p}}\big) - \dfrac{\vartheta_{\e,p}^{p^*}}{p^{*}}\big(\mathcal{S}_H^{n/p} - c\,\e^{n/p}\big) - 
			c\,\varepsilon^{\beta_{p,q,n}}\\[0.1cm]
			& \qquad = \dfrac{1}{n}\frac{(\mathcal{S}_H^{n/p} + c\,\e^{\frac{n-p}{p}})^{\frac{n}{p}}}
			{(\mathcal{S}_H^{n/p} - c\,\e^{n/p})^{\frac{n-p}{p}}}-
			c\,\varepsilon^{\beta_{p,q,n}} \\[0.1cm]
			& \qquad= \frac{1}{n}\mathcal{S}_H^{n/p}(1+c\,\e^{\frac{n-p}{p}})^{\frac{n}{p}}
			(1-c\,\e^{n/p})^{-\frac{n-p}{p}}- c\,\varepsilon^{\beta_{p,q,n}}
			\\[0.1cm]
			& \qquad \leq \dfrac{1}{n}(\mathcal{S}_H)^{n/p}+ c\,\varepsilon^{\frac{n-p}{p}} - 
			c\,\varepsilon^{\beta_{p,q,n}}
			<\dfrac{1}{n}(\mathcal{S}_H)^{n/p},
		\end{aligned}
	\end{equation*}
	\noindent provided that $\e > 0$ is sufficiently small
	(here, $c > 0$ denotes a constant possibly different from line to line
	but independent of $\e$). 
	We explicitly stress that, in the last estimate, we have exploited in a crucial way
	the assumption $n > \kappa_{p,q}$, which is equivalent to
	$$\frac{n-p}{p} > \beta_{n,p,q} = \frac{q(n-p)}{p}\Big(\frac{1}{p}-1\Big)+\frac{n(p-1)}{p}.$$
	
	\textsc{Case (ii):  $p<n\leq \kappa_{p,q}$.} In this second case, we 
	begin by claiming that
	\begin{equation} \label{eq:claimtlambdazero}
		\lim_{\lambda\to\infty}t_{\e,\lambda} = 0.
	\end{equation}
	Indeed, suppose by contradiction that $\ell: = \limsup_{\lambda\to\infty}t_{\e,\lambda} > 0$: 
	then, by possibly choosing a sequence $\{\lambda_k\}_{k}$ diverging to $+\infty$, 
	from \eqref{eq:gprimezerosemp} we get
	\begin{align*}
		\mathcal{S}_H^{n/p} + c\,\e^{\frac{n-p}{p}} = 
		t_{\varepsilon,\lambda_k}^{p^{*} -p}\big(\mathcal{S}_H^{n/p} - c\,\e^{n/p}\big)
		+c\lambda_k t_{\varepsilon,\lambda_k}^{q-p}\varepsilon^{\beta_{p,q,n}}\to+\infty,
	\end{align*}
	but this is clearly absurd. Now we have established 
	\eqref{eq:claimtlambdazero}, we can easily complete the proof of the lemma:
	indeed, by combining \eqref{eq:tociteIntro} with \eqref{eq:claimtlambdazero}, we have
	\begin{align*}
		0 & \leq \sup_{t\geq 0}\mathcal{J}_{q,\lambda}(t \eta_{\varepsilon})
		\leq g(t_{\e,\lambda})
		\\
		& \leq \dfrac{t_{\e,\lambda}^p}{p}
		\big(\mathcal{S}_H^{n/p} + c\,\e^{\frac{n-p}{p}})\big) 
		- \dfrac{t_{\e,\lambda}^{p^*}}{p^{*}}\big(\mathcal{S}_H^{n/p} 
		- c\,\e^{n/p})\big)\to 0\quad\text{as $\lambda\to \infty$},
	\end{align*}
	and this readily implies the existence of $\lambda_0 = \lambda_0(p,n,\e)> 0$ such that
	$$\sup_{t\geq 0}\mathcal{J}_{q,\lambda}(t \eta_{\varepsilon}) < \frac{1}{n}(S_H)^{n/p}\quad\text{for all
		$\lambda\geq \lambda_0$},$$
	provided that $\e > 0$ is small enough but \emph{fixed}. This ends the proof.
\end{proof}

	\begin{proof}[Proof of Theorem \ref{thm:PertNonlinear}] Let $\e > 0$ so small that Lemma \ref{lem:PathLow}
applies, and let $\bar{t} > 0$ be such that
	 $\bar{t}\,\|v_\e\|_{H,p} > \rho$ and $\mathcal{J}_{q,\lambda}(\bar{t}v_\e) < 0$,
	 where $\rho > 0$ is as in Lemma \ref{lem:MPGeometry}. We then set
	\begin{equation*}
	\begin{gathered}
	\alpha = \inf_{\gamma\in \mathcal{C}}\sup_{t\in[0,1]}\mathcal{J}_{q,\lambda}(\gamma(t)),
	 \quad \text{where $\mathcal{C} = \{\gamma\in C([0,1];W_0^{1,p}(\Omega)):\,\gamma(0) =0,\,\gamma(1) 
	 = \bar{t}v_\e\}$.}
	 \end{gathered}
	 \end{equation*}
	 Since $\mathcal{J}_{q,\lambda}$ satisfies the moun\-tain\--pass geometry (see Lemma \ref{lem:MPGeometry}),
	 by the well-known mountain pass theorem we know that $\alpha \geq \delta_0 > 0$, and
	 that there exists a Palais-Smale sequence for $\mathcal{J}_{q,\lambda}$ at level $\alpha$;
	 most importantly, from Lemma \ref{lem:Path} we derive that
	 $$\alpha \leq \max_{t\geq 0}\mathcal{J}_{q,\lambda}(tv_\e)< \frac{\mathcal{S}_H^{n/p}}{n},$$
	 and this holds for every $\lambda > 0$ if $n>\kappa_{p,q}$, or for every $\lambda\geq \lambda_0$,
	 if $n\leq \kappa_{p,q}$.

	 We are then entitled to apply Lemma \ref{lem:PSseq}, ensuring that there exists
	 a weak solution of problem \eqref{eq:Problem} in this case (namely, $1<p<n$
	 and $q > p$).
	\end{proof}
	
\section{Proof of Theorem \ref{thm:Non_Existence}}	 \label{sec:NonExist}
In this last section we provide the proof
of the non-existence result in Theorem \ref{thm:Non_Existence}.
\begin{proof}[Proof of Theorem \ref{thm:Non_Existence}]
  Assume, by contradiction, that there exists a solution
  $u\in C^1(\overline{\Omega})$ of problem \eqref{eq:Problem_general} (for
  some $\lambda\leq 0$). In particular, since $u > 0$ in $\Omega$
  and $u\equiv 0$ pointwise on $\de\Omega$, by the Hopf boundary lemma
  in \cite[Theorem 4.5]{CaRiSc} we have
  \begin{equation} \label{eq:HopfLemmaAnis}
   \text{$\nabla u\not\equiv 0$ on $\de\Omega$}.
  \end{equation}
  Now, since we are assuming $\lambda\leq 0$ (and since $u$ is regular),
  by the anisotropic version of the Pohozaev identity proved in \cite[Theorem 1.2]{MS} we have that $u$ solves 	
	\begin{align}\label{Pohozaev_MS}
		\dfrac{n\lambda}{p}\int_\Omega u^p\, dx + \dfrac{n}{p^\star}\int_\Omega u^{p^\star}\, dx -&\frac{n-p}{p}\int_\Omega H(\nabla u)^p\, dx = -\frac{1}{p}\int_{\partial\Omega}H(\nabla u)^p \langle x,\nu \rangle\, d\sigma \nonumber \\
		& + \int_{\partial\Omega}H(\nabla u)^{p-1}\langle x,\nabla u\rangle \langle \nabla H(\nabla u),\nu \rangle \, d\sigma\, .
	\end{align}
	Since  $u \in C^{1}(\overline{\Omega})$ and $u=0$ on $\partial\Omega$, by
	\eqref{eq:HopfLemmaAnis} we deduce that
	$\nabla u=\langle \nabla u,\nu \rangle\nu$ on $\partial\Omega$;
	hence, using the $0$-homogeneity of $\nabla H$ and Euler theorem we deduce 
	\[
		\begin{split}
			\int_{\partial\Omega}H(\nabla u)^{p-1} \langle x,\nabla u \rangle \langle \nabla H(\nabla u),\nu\rangle\, d\sigma&= \int_{\partial\Omega}H(\nabla u)^{p-1} \langle x,\nabla u \rangle \langle \nabla H(\langle \nabla u,\nu \rangle\nu),\nu\rangle\, d\sigma\\
			& = \int_{\partial\Omega}H(\nabla u)^{p-1} \langle x,\nabla u \rangle \langle \nabla H(\nu),\nu\rangle\, d\sigma\\
			&=\int_{\partial\Omega} H(\nabla u)^p\langle x, \nu \rangle\, d\sigma \, .
		\end{split}
	\]
	Hence, \eqref{Pohozaev_MS} becomes  
	\begin{equation}\label{poho}
	\begin{split}
		& \dfrac{n\lambda}{p}\int_\Omega u^p\, dx + \dfrac{n}{p^\star}\int_\Omega u^{p^\star}\, dx - \frac{n-p}{p}\int_\Omega H(\nabla u)^p\, dx \\
		& \qquad = \frac{p-1}{p}\int_{\partial\Omega}H(\nabla u)^p\langle x, \nu \rangle\, d\sigma \, .
		\end{split}
	\end{equation}
	We now observe that, since $u$
	is a (regular) weak solution
	of 
	problem 
	\eqref{eq:Problem_general}, by taking $v=u$ in \eqref{eq:WeakSolbyParts} we get 
	\begin{equation}\label{test_u}
		\int_\Omega H(\nabla u)^p\, dx= \int_\Omega u^{p^\star}\, dx+\lambda\int_\Omega u^p\, dx\, . 
	\end{equation}
	By substituting \eqref{test_u} in \eqref{poho} we obtain  
	$$
	\lambda\int_\Omega u^p\, dx = \frac{p-1}{p}\int_{\partial\Omega}H(\nabla u)^p\langle x,
	 \nu \rangle\, d\sigma \, .
	$$ 
	On the one hand, since $\lambda\leq 0$ (and since $u > 0$ in $\Omega$), we immediately have 
	$$
	\lambda\int_\Omega u^p\, dx \leq  0\, , 
	$$ 
	while on the other hand, begin $\Omega$ star-shaped, we have  
	$$
	\frac{p-1}{p}\int_{\partial\Omega}H(\nabla u)^p\langle x, \nu \rangle\, d\sigma\geq 0\, ,
	$$
	but this is clearly a contradiction.
\end{proof}

\end{document}